\newtheorem{theorem}{Theorem}[section]
\newtheorem{definition}[theorem]{Definition}
\newtheorem{proposition}[theorem]{Proposition}
\newtheorem{corollary}[theorem]{Corollary}
\newtheorem{lemma}[theorem]{Lemma}
\newtheorem{example}[theorem]{Example}
\def\qed{\hfill $\Box$\medskip}
\def\diag{{\rm diag}\,}
\def\bM{{\mathbb M}}
\def\bB{{\mathbb B}}
\def\bS{{\mathbb S}}
\newcommand{\tens}[1]{\mathbin{\mathop{\otimes}\limits_{#1}}}
\def\Span{\mathop{\rm Span}\nolimits}
\def\rk{\mathop{\rm rank}}
\def\cS{{\mathcal S}}
\def\IC{{\mathbb{C}}}
\def\IR{{\mathbb{R}}}
\def\IL{{\mathbb{L}}}
\def\IT{{\mathbb{T}}}
\def\bV{{\mathbf V}}
\def\tr{\mathop{\rm tr}}
\def\IF{{\mathbb F}}
\def\cl{{\bf cl}}
\def\bL{{\mathbf L}}
\def\Pert{{\rm Pert}}
\def\IC{{\mathbb C}}
\def\IR{{\mathbb R}}
\def\tr{{\mathrm{Tr}}}
\def\cC{{\mathcal C}}
\def\bH{{\mathbb H}}
\begin{document}
\openup 1\jot

\title[Preserving parallel pairs with respect to the Ky-Fan $k$-norm]{Linear maps preserving parallel matrix pairs \\
with respect to the Ky-Fan $k$-norm}
\author{Kuzma, Li, Poon, Singla}
\maketitle

\begin{abstract}
Two bounded linear operators $A$ and $B$ are parallel with respect to a norm
$\|\cdot\|$ if $\|A+\mu B\| = \|A\| + \|B\|$ for some scalar $\mu$ with $|\mu| = 1$.
Characterization is obtained for bijective linear maps sending
parallel bounded linear operators to parallel bounded linear operators with respect to the Ky-Fan $k$-norms.
\end{abstract}

Keywords. Matrix, Ky-Fan $k$-norms, parallel pairs, bijective linear maps.

AMS Classification. 15A60; 15A86.
\section{Introduction}

Let $V$ be a normed space equipped with the norm $\| \cdot \|$ over $\IF = \IC$ or $\IR$.
Suppose $x, y \in V$. Then
$x$ is parallel to $y$, denoted by $x\| y$,
if $\|x + \mu y\| = \|x\| + \|y\|$ for  some $\mu\in \IF$ with $|\mu| = 1$.
We are interested in studying bijective
linear maps $T\colon  V \rightarrow V$ preserving parallel pairs,
i.e., $T(x)\| T(y)$ whenever $x\|y$.
Note that a map of the form $A \mapsto f(A)B$ for a fixed $B$
always preserves parallel pairs, so one needs to be careful in weakening
the invertibility assumption.
Evidently, if $T$ is a multiple of a linear  isometry for $\|\cdot\|$, then
$T$ will preserve parallel pairs. However, the converse may not hold for a
general norm.
For instance, suppose $\|\cdot\|$ is a strictly convex norm, i.e.,
$\|x+y\| < \|x\|+ \|y\|$ whenever $x,y$ are linearly independent. Then $(\alpha x,\beta x)$ with $x\in V$ and $\alpha,\beta\in\IF$ are the only
parallel pairs  and consequently  in strictly convex spaces every linear map preserves parallel pairs.

\medskip
Denote by $\bM_n(\IF)$ the linear space of $n\times n$ matrices over the field $\IF$. It turns out that if $\|\cdot\|$ is the operator norm on $\bM_n(\IF)$, then
a bijective linear map $T\colon \bM_n(\IF) \rightarrow \bM_n(\IF)$ preserving parallel
pairs must be a multiple of an isometry except for the $2\times 2$ real case. In the $2\times 2$ case,
there are additional bijective preservers
of parallel pairs. These exceptional maps also preserve matrix pairs $(A,B)$ such that
$\|A+B\| = \|A\| + \|B\|$ in the real case, but
not in the complex case; see \cite{LTWW}. Hence, if $n > 2$, then a bijective linear map preserving parallel pairs with respect to the operator norm has the form
\begin{equation}
\label{standard}
A \mapsto \gamma UAV  \quad \hbox{ or } \quad A \mapsto \gamma UA^tV,
\end{equation}
where $U, V\in \bM_n(\IF)$ are unitary (orthogonal when $\IF = \mathbb R$), and $A^t$ is the transpose of $A$ with respect
to a fixed orthonormal basis and $\gamma$ is a nonzero scalar.

\medskip
Let $k \in \{1, \dots, n\}$. Given $A \in \bM_n(\IF)$, denote by $s_1(A) \ge \cdots \ge s_n(A)$ the singular values of $A$, i.e., the nonnegative square roots of the eigenvalues of $A^*A$. The Ky-Fan $k$-norm for $A \in \bM_n(\IF)$ is defined as $\|A\|_{(k)}=s_1(A)+\dots+s_k(A)$. Evidently, $\|A\|_{(1)}=s_1(A)$
reduces to the operator (i.e., spectral) norm. When $k=n$, $\|A\|_{(n)}$ is known as the trace norm and corresponds to the dual of the spectral norm.

In this paper, we  characterize bijective linear maps preserving parallel pairs with respect to the Ky-Fan $k$-norm on $\bM_n(\IF)$. Since the result for $k = 1$ is known, we will focus on $k > 1$.
We will prove our result for complex matrices in Section \ref{section2}, and for real matrices in Section~\ref{section3}.  Our results can be easily applied to characterize linear bijections which preserve  matrix pairs satisfying  equality for the triangle inequality in a Ky-Fan~$k$-norm.

\section{Results on complex matrices}\label{section2}
In this section, we prove the following.

\begin{theorem} \label{main2}  Let $\|\cdot\|_{(k)}$ be the Ky-Fan $k$-norm on $\bM_n(\IC)$ for $1 < k \le n$. Suppose $T\colon \bM_n(\IC) \rightarrow \bM_n(\IC)$ is a bijective linear map which preserves parallel pairs, i.e., $$A\|B\quad\hbox{ implies }\quad T(A)\| T(B)$$ with respect to $\|\cdot\|_{(k)}$. Then $T$ is a scalar multiple of a linear isometry of $(\bM_n(\IC),\|\cdot\|_{(k)})$. More precisely, there exists $\gamma > 0$ and unitary $U, V \in \bM_n(\IC)$ such that
$T$ has the form $$X \mapsto \gamma UXV \qquad \hbox{ or } \qquad X \mapsto \gamma UX^tV.$$
\end{theorem}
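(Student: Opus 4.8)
The endpoint of the argument is the classical description of the surjective linear isometries of a unitarily invariant norm on $\bM_n(\IC)$: apart from scalar multiples of the Frobenius norm, every such isometry has the form $X\mapsto UXV$ or $X\mapsto UX^tV$ with $U,V$ unitary. Since the Ky-Fan $k$-norm is never proportional to the Frobenius norm for $n\ge 2$, the entire problem reduces to showing that a bijective parallel-preserver $T$ is a positive scalar multiple of a $\|\cdot\|_{(k)}$-isometry. The plan is therefore to use the parallel relation to recover enough of the metric and linear structure of $(\bM_n(\IC),\|\cdot\|_{(k)})$ to force $T$ into this rigid form.

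First I would record a usable description of the parallel relation. Writing $\|A\|_{(k)}=\max\{\,\Re\tr(W^*A):W^*W\ \text{a rank-}k\ \text{projection}\,\}$, the triangle equality $\|A+\mu B\|_{(k)}=\|A\|_{(k)}+\|B\|_{(k)}$ holds exactly when a single rank-$k$ partial isometry $W$ (a common supporting functional in the dual unit ball) satisfies $\tr(W^*A)=\|A\|_{(k)}$ and $\tr(W^*(\mu B))=\|B\|_{(k)}$. Equivalently, $A$ and $\mu B$ admit a common top-$k$ singular frame. This converts $A\|B$ into the geometric statement that $A$ and $B$ share, after a unimodular twist, $k$ orthonormal left and right singular directions realizing their Ky-Fan norms.

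Armed with this, I would isolate rigid test families. The cleanest are the rank-one matrices $xy^*$ and the flat partial isometries $P=\sum_{i=1}^k u_iw_i^*$ of rank $k$: for the latter the norm-attaining frame is essentially unique, so the set $\{B:P\|B\}$ is small, and its image under $T$ must again be such a parallel-set. Tracking how the parallel-sets of rank-one matrices intersect (two rank-one matrices are parallel in a way governed by their column and row spaces, via the trace-norm behaviour of matrices of rank at most two when $k\ge 2$) should let me show that $T$ carries rank-one matrices to rank-one matrices. A bijective linear rank-one preserver is, by the Marcus--Moyls structure theorem, of the form $X\mapsto \gamma\,UXV$ or $X\mapsto \gamma\,UX^tV$ for some invertible $U,V$ and scalar $\gamma$; evaluating the preserved relation on aligned positive semidefinite matrices (which are parallel with $\mu=1$) should then force $U,V$ to be unitary up to scalars and $\gamma>0$, so that $T$ is a positive multiple of an isometry.

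The main obstacle is the rank-one preservation step. Because the Ky-Fan norm averages the top $k$ singular values, the parallel relation is considerably looser than in the operator-norm case $k=1$: many non-aligned pairs become parallel, so the incidence data must be extracted from carefully chosen extremal configurations rather than from uniqueness of singular vectors. I expect the degenerate singular-value patterns (repeated singular values, and the boundary case $k=n$ where the supporting frame fills the whole space) to require separate handling, and care is needed to separate the transpose from the non-transpose alternative. These difficulties, together with field-specific arguments, are presumably why the complex and real cases are treated in separate sections.
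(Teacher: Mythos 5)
Your endgame matches the paper's: show that $T$ preserves rank-one matrices, invoke Marcus--Moyls/Westwick to get $X\mapsto MXN$ or $X\mapsto MX^tN$, and then use parallel pairs of aligned psd matrices to force $M,N$ to be scalar multiples of unitaries (this last step is exactly the paper's Lemma~\ref{lem:rank-one=preservers}). Your description of parallelism via a common rank-$k$ supporting frame is also essentially Proposition~\ref{lem:P(A)}(c)--(d). The problem is that the entire substance of the proof lies in the rank-one preservation step, and the mechanism you propose for it does not work. For $k\ge 2$ a rank-one matrix $A=xy^*$ has $s_k(A)=s_{k+1}(A)=0$, so by Lemma~\ref{Lem:dimP(A)}(b) its parallel class is enormous: $\Span P(xy^*)=\bM_n$. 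Thus rank-one matrices are \emph{not} distinguished by their parallel classes, and ``tracking how the parallel-sets of rank-one matrices intersect'' cannot isolate them; the preserver hypothesis only gives the one-sided inclusion $T(P(A))\subseteq P(T(A))$, which for a full-span class carries no information. The objects with small (minimal-dimensional) spans are precisely the matrices with $s_k>s_{k+1}$, whose class spans $U(\bM_k\oplus\bM_{n-k})V^*$, and the paper has to exploit exactly this: it first shows $T^{-1}$ preserves the gap condition $s_k>s_{k+1}$ by a dimension count, then proves that $T$ maps the cone $\cC$ of aligned psd block matrices onto itself (Proposition~\ref{prop:phi-preservec-cone-CC}, which already requires a delicate argument ruling out a unimodular twist of the first block), and finally recovers the rank-one matrices as the boundary points $X\in\partial\cC$ with $\dim_\IR\Span_\IR\Pert(X)=1$, separating them from the other such family $\cS_U=\{a(I_k\oplus P)\}$ by a connectedness-plus-dimension comparison. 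None of this machinery, nor any substitute for it, appears in your outline.

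A secondary gap: you flag the case $k=n$ as needing care but offer no plan for it, and indeed the frame-based picture degenerates there (every matrix has $s_k(A)=s_n(A)$ equal to the smallest singular value, and $\Span P(A)$ is never of the block form above). The paper handles $k=n$ by an entirely different route: a cone criterion inside $P(A)$ characterizes singular matrices, so $T$ preserves singularity and Dieudonn\'e's theorem yields the form $MXN$ or $MX^tN$ directly. Your opening reduction to the classification of isometries of unitarily invariant norms is also not needed once rank-one preservation is in hand, but that is harmless; the missing rank-one argument is the essential defect.
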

By the above theorem, one can determine the structure of bijective linear maps preserving
matrix pairs $A, B\in \bM_n(\IC)$ satisfying the equality in the triangle inequality, i.e.,
$\|A+B\| = \|A\|+\|B\|$.
Clearly, if a linear bijection $T$ preserves such pairs,
then it also preserves parallelism. To see this,   if $A\|B$, then for a complex unit $\mu$
we have $\|A+\mu B\|=\|A\|+\|\mu B\|$, hence $(A,\mu B)$ is a pair where the triangle inequality becomes equality, so that
$\|T(A)+ \mu T(B)\| = \|T(A)\| + \|\mu T(B)\|$. Let us state this formally.

\begin{corollary}\label{maincor}
 Assume a linear bijection $T\colon\bM_n(\IC)\to\bM_n(\IC)$ is such that $\|A+B\|_{(k)}=\|A\|_{(k)}+\|B\|_{(k)}$ implies $\|T(A)+T(B)\|_{(k)}=\|T(A)\|_{(k)}+\|T(B)\|_{(k)}$, where $\|\cdot\|_{(k)}$ is a Ky-Fan $k$-norm. Then, $T$ is a scalar multiple of a linear isometry of $(\bM_n(\IC),\|\cdot\|_{(k)})$ and hence takes one of the forms from Theorem~\ref{main2}.
\end{corollary}

We will first present the proof of the theorem for the trace-norm (i.e., for $k=n$) and then for general Ky-Fan $k$-norms with $1 < k < n$.
 We start by listing some equivalent conditions for $A, B \in \bM_n(\IC)$ to be parallel relative to the Ky-Fan $k$-norm. We will write
`psd' for a positive semidefinite matrix and we will denote the set of all $n\times n$ psd matrices by $\mathrm{psd}_n$.
A complex unit is a complex number with modulus one.
For notational simplicity, we write~$\bM_n$ instead of $\bM_n(\IC)$ in this section.

\begin{proposition}\label{lem:P(A)}
Let $1< k\le n$. Two matrices $A, B \in \bM_n$ are parallel if and only if any one of the following conditions holds.
\begin{itemize}
\item[{\rm (a)}] There are unitary $U, V \in \bM_n$ and a complex unit $\mu$ such that $U^*(A+ \mu B)V = \diag(c_1, \dots, c_n)$ with $c_1 \ge \cdots \ge c_n$ satisfying
 $\sum_{j=1}^k c_j = \|A\|_{(k)} + \|B\|_{(k)}$.
\item[{\rm (b)}] There are unitary  $U, V \in \bM_n$ and a complex unit $\mu$ such that $U^*AV = A_1 \oplus A_2$ and $U^*BV = B_1 \oplus B_2$, where $A_1 = \diag(s_1(A), \dots, s_k(A))$ and $\mu B_1$ is  psd  with eigenvalues $s_1(B), \dots, s_k(B)$.
\item[{\rm (c)}] There are $n\times k$ matrices $U_1, V_1$ and a complex unit $\mu$ such that $U_1^*U_1 = V_1^*V_1 = I_k$, $U_1^*AV_1 = \diag(s_1(A), \dots, s_k(A))$, and $\mu U_1^*BV_1$ is  psd  with eigenvalues $s_1(B), \dots, s_k(B)$.
\item[{\rm (d)}] There are two orthonormal sets $\{u_1, \dots, u_k\}, \{v_1, \dots, v_k\}  \subseteq \IC^n$ such that $Av_j = s_j(A) u_j$ for $j = 1, \dots, k$ and $[u_1 |\cdots| u_k]^*B[v_1 |\cdots | v_k]$ is a unit multiple of a psd matrix with eigenvalues $s_1(B), \dots, s_k(B)$.
\end{itemize}

\end{proposition}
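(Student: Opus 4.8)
The plan is to separate condition (a), which is essentially a restatement of the definition through the singular value decomposition, from (b)--(d), which carry the finer structural information; the latter I would establish by running the cycle
\[
A\|B \ \Rightarrow\ \text{(b)}\ \Rightarrow\ \text{(c)}\ \Rightarrow\ \text{(d)}\ \Rightarrow\ A\|B.
\]
The engine for everything is the standard variational formula
\[
\|M\|_{(k)} = \max\{\,\Re\tr(U_1^* M V_1) : U_1^*U_1 = V_1^*V_1 = I_k\,\},
\]
where the maximum runs over $n\times k$ matrices $U_1,V_1$ and is attained at the top-$k$ left/right singular vectors. The first thing I would isolate is the \emph{equality case}: if $U_1,V_1$ attain this maximum for $M$, then $U_1^*MV_1$ is psd with eigenvalues $s_1(M),\dots,s_k(M)$, and moreover the columns of $MV_1$ lie in the column space of $U_1$ while the columns of $M^*U_1$ lie in the column space of $V_1$. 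These range conditions are precisely what upgrades the numerical identity $u_i^*Mv_j=s_j(M)\delta_{ij}$ (available after a further common rotation diagonalizing the compression) to the genuine singular-vector equations $Mv_j=s_j(M)u_j$ and $M^*u_j=s_j(M)v_j$.

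For (a): for any complex unit $\mu$ the triangle inequality gives $\|A+\mu B\|_{(k)}\le\|A\|_{(k)}+\|B\|_{(k)}$, so $A\|B$ means equality holds for some $\mu$; writing the SVD $A+\mu B = U\diag(c_1,\dots,c_n)V^*$ with $c_1\ge\cdots\ge c_n\ge 0$ yields (a). Conversely, if (a) holds, then the $c_j$ are up to sign the singular values of $A+\mu B$, and comparing $\sum_{j=1}^k c_j$ with $\|A+\mu B\|_{(k)}$ against the triangle bound forces the leading $c_j$ to be nonnegative and to equal the top singular values, which is equality in the triangle inequality, i.e. $A\|B$.

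For $A\|B \Rightarrow$ (b), I would pick $U_1,V_1$ attaining the maximum for $M=A+\mu B$, where $\mu$ witnesses parallelism. Then
\[
\|A\|_{(k)}+\|B\|_{(k)} = \Re\tr\big(U_1^*(A+\mu B)V_1\big) = \Re\tr(U_1^*AV_1) + \Re\tr\big(U_1^*(\mu B)V_1\big),
\]
and since the two summands are at most $\|A\|_{(k)}$ and $\|\mu B\|_{(k)}=\|B\|_{(k)}$, both bounds are attained; thus $U_1,V_1$ are simultaneously optimal for $A$ and for $\mu B$. Applying the equality case to each shows $U_1^*AV_1$ is psd with eigenvalues $s_1(A),\dots,s_k(A)$ and $\mu U_1^*BV_1$ is psd with eigenvalues $s_1(B),\dots,s_k(B)$, with the attendant range conditions. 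Conjugating $U_1,V_1$ by a common unitary diagonalizing the psd block $U_1^*AV_1$ turns it into $\diag(s_1(A),\dots,s_k(A))$ while keeping $\mu U_1^*BV_1$ psd; extending to unitaries $U=[U_1\mid U_2]$, $V=[V_1\mid V_2]$ and using the range conditions to annihilate the off-diagonal blocks gives $U^*AV=A_1\oplus A_2$, $U^*BV=B_1\oplus B_2$ as in (b).

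The remaining steps are lighter. The implication (b) $\Rightarrow$ (c) is immediate on taking $U_1,V_1$ to be the first $k$ columns of $U,V$. For (c) $\Rightarrow$ (d), the compression inequality $\tr(V_1^*A^*AV_1)\le\sum_{j=1}^k s_j(A)^2$ combined with $U_1^*AV_1=\diag(s_1(A),\dots,s_k(A))$ forces each $Av_j$ to have no component orthogonal to the span of the $u_i$, promoting $u_i^*Av_j=s_j(A)\delta_{ij}$ to $Av_j=s_j(A)u_j$; the condition on $\mu U_1^*BV_1$ carries over verbatim. Finally (d) $\Rightarrow A\|B$ follows by feeding $u_1,\dots,u_k$ and $v_1,\dots,v_k$ into the variational lower bound, which produces $\Re\tr\big(U_1^*(A+\mu B)V_1\big)=\|A\|_{(k)}+\|B\|_{(k)}$ and hence equality in the triangle inequality. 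The genuine obstacle is the equality case of the variational formula, above all extracting the range conditions when singular values are repeated and the optimal subspaces are not unique; I would handle this by arguing with the full singular subspaces rather than individual vectors and by using the trace/compression inequality to pin down the total singular mass captured by $U_1$ and $V_1$.
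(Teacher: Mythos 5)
Your proposal is correct and follows essentially the same route as the paper: both proofs hinge on splitting the trace of the $k$-dimensional compression determined by an optimal (SVD) frame for $A+\mu B$ into the contributions of $A$ and $\mu B$, forcing each to saturate its Ky-Fan bound, and then invoking the equality case (trace equals sum of the top $k$ singular values implies a unit multiple of a psd block together with the block-diagonal splitting). The only difference is presentational: where the paper cites Thompson's lemma and Li's Corollary 3.2 for that equality case, you supply a self-contained proof via the variational formula and the Frobenius-norm/range argument, and you spell out the promotions (b)$\Rightarrow$(c)$\Rightarrow$(d) that the paper records as clear.
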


\begin{proof} Condition (a) is clearly equivalent to $A\| B$. Suppose (a) holds. Let $U^*AV$ and $U^*BV$ have leading $k\times k$ submatrices $A_1$ and $B_1$. Then, by the assumptions in (a) and by \cite[Lemma~2]{Thompson}, $$\|A\|_{(k)} + \|B\|_{(k)} = |\tr (A_1+\mu B_1)| \le |\tr A_1| + |\tr B_1| \le \|A\|_{(k)} + \|B\|_{(k)}.$$
Thus, $|\tr A_1| = \sum_{j=1}^k s_j(A)$ and $|\tr B_1| = \sum_{j=1}^k s_j(B)$. By  \cite[Corollary 3.2]{Li} there are complex units $\mu_1, \mu_2$ such that $U^*AV = A_1 \oplus A_2$ and $U^*BV = B_1 \oplus B_2$, where $\mu_1 A_1$  is  psd  with eigenvalues
$s_1(A), \dots, s_k(A)$ and $\mu_2 B_1$ is  psd  with eigenvalues $s_1(B), \dots, s_k(B)$, respectively. Let $W \in \bM_k$ be a unitary with $\mu_1 W^*A_1 W = \diag(s_1(A), \dots, s_k(A))$. We get condition (b) if we replace $(U,V)$ by $(U(W \oplus I_{n-k}), \mu_1 V(W\oplus I_{n-k}))$ and then let $\mu = \bar \mu_1 \mu_2$.

If (b) holds, then we  have $$\|A+  \mu B\|_{(k)} = \|U^*(A+ \mu B)V\|_{(k)} \ge |\tr A_1+ \mu \tr B_1| = \|A\|_{(k)} + \|B\|_{(k)}.$$ Thus, $A$ and $B$ are parallel. The equivalence of (b), (c), (d)  is clear.
\end{proof}
We remark that  we will rely heavily on the equivalence between (a) and (b) of the above  proposition in our subsequent proofs. The other equivalences might be of independent interest. 

A key step in our proofs is to show that a bijective linear map preserving parallel pairs
has the form $T\colon X\mapsto MXN$ and $T\colon X\mapsto MX^tN$ for invertible matrices $M$ and $N$. One can then finish the proof using the following lemma.

\begin{lemma}\label{lem:rank-one=preservers}
Let $M,N\in\bM_n$ be invertible matrices. The maps $T\colon X\mapsto MXN$ and $T\colon X\mapsto MX^tN$ preserve parallel pairs with respect to a Ky-Fan $k$-norm $(1< k\le n)$ if and only if both $M$ and $N$ are scalar multiples of a unitary matrix.
\end{lemma}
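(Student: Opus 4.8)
The plan is as follows. The ``if'' direction is immediate: if $M=\gamma_1 U$ and $N=\gamma_2 V$ with $U,V$ unitary and $\gamma_1,\gamma_2$ scalars, then $X\mapsto MXN=\gamma_1\gamma_2\,UXV$ (resp.\ $X\mapsto\gamma_1\gamma_2\,UX^tV$) is a scalar multiple of a unitary equivalence, hence of a Ky-Fan isometry, and as observed in the introduction any scalar multiple of an isometry preserves parallel pairs. For the converse, the transpose form reduces to the non-transpose one, since $X\mapsto X^t$ is a Ky-Fan isometry and so $X\mapsto MX^tN$ preserves parallel pairs iff $X\mapsto MXN$ does. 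So assume $T(X)=MXN$ preserves parallel pairs with $M,N$ invertible. Writing singular value decompositions $M=U_1D_1V_1^*$ and $N=U_2D_2V_2^*$ with $D_1,D_2$ positive diagonal, and using that $X\mapsto V_1^*XU_2$ and $Y\mapsto U_1YV_2^*$ are parallelism-preserving bijections with parallelism-preserving inverses, the map $S(X)=D_1XD_2$ again preserves parallel pairs. It therefore suffices to show $D_1=\sigma I$ and $D_2=\tau I$ for scalars $\sigma,\tau>0$.

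The main device is localization to $2\times 2$ blocks. Fix indices $i<j$ and consider matrices supported on the rows and columns $\{i,j\}$; any such matrix has rank at most $2\le k$, whence $\|\cdot\|_{(k)}$ restricted to such matrices coincides with the trace norm. Moreover, since $D_1,D_2$ are diagonal, $S$ maps block-supported matrices to block-supported matrices. Writing $D_1,D_2$ on the block as $\diag(\sigma_i,\sigma_j)$ and $\diag(\tau_i,\tau_j)$, the question reduces to whether $X\mapsto\diag(\sigma_i,\sigma_j)\,X\,\diag(\tau_i,\tau_j)$ preserves \emph{trace-norm} parallelism on $\bM_2$. The computational engine is the identity $\|X\|_{(2)}^2=\|X\|_F^2+2|\det X|$ for $X\in\bM_2$, which turns each test $\|X+\mu Y\|_{(2)}=\|X\|_{(2)}+\|Y\|_{(2)}$ into an elementary optimization in which only $|1+\mu|\in[0,2]$ enters.

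The one real obstacle is \emph{which} parallel pairs to test, since the obvious candidates are inert. Any two positive semidefinite matrices are parallel (their sum is psd, hence trace-norm-additive), and congruence $X\mapsto DXD$ sends psd to psd; thus every psd test pair stays parallel and yields no constraint. The key is a parallel pair whose common ``direction'' is a genuinely non-scalar unitary, which congruence will misalign. I will take $W=\tfrac1{\sqrt2}\bigl(\begin{smallmatrix}1&1\\1&-1\end{smallmatrix}\bigr)$, $P=\bigl(\begin{smallmatrix}1&0\\0&0\end{smallmatrix}\bigr)$, and the pair $A=WP$, $B=W$; one checks $A\,\|\,B$ directly, since $A+B=W\diag(2,1)$ gives $\|A+B\|_{(2)}=3=\|A\|_{(2)}+\|B\|_{(2)}$. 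Applying $S$ and the $2\times 2$ engine, and writing $a=\sigma_i\tau_i,\ b=\sigma_i\tau_j,\ c=\sigma_j\tau_i,\ d=\sigma_j\tau_j$ (so $ad=bc$), the supremum of $\|S(A)+\mu S(B)\|_{(2)}^2$ over unit $\mu$ is attained at $\mu=1$ and equals $\bigl(\|S(A)\|_{(2)}+\|S(B)\|_{(2)}\bigr)^2$ if and only if $4a^2d^2=(a^2+c^2)(b^2+d^2)$. Since $(a^2+c^2)(b^2+d^2)-4a^2d^2=\tau_i^2\tau_j^2(\sigma_i^2-\sigma_j^2)^2$, parallelism of the images forces $\sigma_i=\sigma_j$.

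Running the same test after conjugating $S$ by the transpose map, which replaces $D_1XD_2$ by $D_2XD_1$, forces $\tau_i=\tau_j$. As $i<j$ were arbitrary, $D_1=\sigma I$ and $D_2=\tau I$, so $M=\sigma U_1V_1^*$ and $N=\tau U_2V_2^*$ are scalar multiples of unitaries. The crux, and the only step requiring genuine insight rather than routine computation, is the selection of the non-psd pair $(WP,W)$: it is calibrated so that the discriminant of the resulting scalar equation is the perfect square $\tau_i^2\tau_j^2(\sigma_i^2-\sigma_j^2)^2$, which vanishes precisely when the two diagonal entries coincide.
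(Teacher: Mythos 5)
Your proof is correct; I checked the key computation. With $a=\sigma_i\tau_i$, $b=\sigma_i\tau_j$, $c=\sigma_j\tau_i$, $d=\sigma_j\tau_j$ one indeed gets $\det\bigl(S(A)+\mu S(B)\bigr)=-\mu(1+\mu)ad$ and $\|S(A)+\mu S(B)\|_F^2=\tfrac12\bigl[(a^2+c^2)|1+\mu|^2+b^2+d^2\bigr]$, so the squared norm is increasing in $t=|1+\mu|$, the supremum sits at $\mu=1$, and after squaring the parallelism condition the discrepancy is exactly $(ab-cd)^2=\tau_i^2\tau_j^2(\sigma_i^2-\sigma_j^2)^2$, forcing $\sigma_i=\sigma_j$; the transpose trick then handles $D_2$, and the $2\times2$ localization is legitimate because block-supported matrices have rank at most $2\le k$, so the Ky-Fan $k$-norm agrees with the trace norm on them.

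The skeleton matches the paper's --- reduce by singular value decomposition to $X\mapsto D_1XD_2$ with positive diagonal factors, localize to a $2\times2$ block, and test hand-picked parallel pairs --- but the execution differs in two respects worth noting. The paper works in two stages: it first feeds in the psd rank-one pair $e_1e_1^*$, $(e_1+e_2)(e_1+e_2)^*$, whose image $(e_1+\hat\xi_2e_2)(e_1+\hat\eta_2e_2)^*$ is a unit multiple of a psd matrix only when $\hat\xi_2=\hat\eta_2$, which yields $D_1=D_2=:D$; it then uses the non-psd pair $e_1(e_1+e_2)^*$, $e_2(-e_1+e_2)^*$ and an analysis, via Proposition~\ref{lem:P(A)}(b), of which unitaries can bring the images to the required form, to conclude $D=I$. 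You collapse this into a single test per factor: the pair $(WP,W)$ happens to constrain only the left factor $D_1$ (its discriminant involves only $\sigma_i^2-\sigma_j^2$), and conjugating by the transpose transfers the identical constraint to $D_2$; you also replace the ``which unitaries align'' analysis with the closed-form identity $\|X\|_{(2)}^2=\|X\|_F^2+2|\det X|$, which turns the verification into a one-line discriminant calculation. What your route buys is a shorter, purely computational verification that never needs the structural description of parallel pairs; what the paper's route buys is that the same style of argument (psd test pairs plus Proposition~\ref{lem:P(A)}) recurs throughout the rest of the paper, so it keeps the toolkit uniform. One small caveat: your aside that psd test pairs ``yield no constraint'' is accurate only for congruences $X\mapsto DXD$; for $X\mapsto D_1XD_2$ with $D_1\ne D_2$ a psd pair does carry information (that is precisely the paper's first stage). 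Since your argument never relies on that remark, it is a stray comment rather than a gap.
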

\begin{proof} Let $M = X_1D_1Y_1$ and $N = X_2 D_2 Y_2$, where $X_1, X_2, Y_1, Y_2 \in \bM_n$ are unitary, and $D_1 = \diag (\xi_1, \dots, \xi_n)$ and $D_2 = \diag (\eta_1, \dots, \eta_n)$
with $\xi_1 \ge \cdots \ge \xi_n > 0$ and $\eta_1 \ge \cdots \ge \eta_n > 0$.

We can replace $T$ by the linear map $\Psi$ defined by $$A\mapsto [X_1^* T(Y_1^*AX_2^*)Y_2^*]/(\xi_1\eta_1),$$ or by $$A\mapsto [X_1^*T((Y_1^*AX_2^*)^t)Y_2^*]/(\xi_1\eta_1)$$ if the transpose map is involved. Then $\Psi$ will also preserve parallel pairs and has the simple form $A \mapsto \hat D_1A \hat D_2$, where $\hat D_1= D_1/\xi_1$, $\hat D_2 = D_2/\eta_1$ have diagonal entries $1, \hat \xi_2, \dots, \hat \xi_n$ and $1, \hat \eta_2, \dots, \hat\eta_n$. Observe that $G_1=e_1e_1^\ast$ is parallel to $G_2=(e_1+e_2)(e_1+e_2)^\ast$ because they are both rank-one psd matrices. Hence, $\Psi(G_1)=e_1e_1^\ast$ and $\Psi(G_2)=(e_1+\hat{\xi}_2e_2)(e_1+\hat{\eta}_2e_2)^\ast$ are also parallel and both belong to a  subspace $\bM_2\oplus 0_{n-2}$. Since parallelism is defined  by the norm, $\Psi(G_1),\Psi(G_2)$ are parallel if and only if their compressions to~$\bM_2$ are parallel.  By Proposition~\ref{lem:P(A)}(b), there exist unitary $U_1,V_1\in\bM_2$ such that $U_1\Psi(G_1)V_1=U_1e_1e_1^\ast V_1=e_1e_1^*$ and $U_1\Psi(G_2)V_1\in\bM_2$ is a scalar multiple of a psd. From the first identity, $U_1,V_1$ are both diagonal unitary matrices, and then the second requirement, together with $\hat{\eta}_2,\hat{\xi}_2>0$,  implies $\hat{\eta}_2=\hat{\xi}_2$. Similarly, $\hat{\eta}_i=\hat{\xi}_i$ for each $i$ so
$$\hat D_1=\hat D_2=:D=\diag(1,\hat{\eta}_2,\dots,\hat{\eta}_n).$$

Consider next $G_1=e_1(e_1+e_2)^\ast$ and $G_2=e_2(-e_1+e_2)^\ast$ which are also parallel because, for $W = \left(
\begin{smallmatrix}
	\frac{1}{\sqrt{2}} & -\frac{1}{\sqrt{2}} \\
	\frac{1}{\sqrt{2}} & \frac{1}{\sqrt{2}} \\
\end{smallmatrix}
\right) \oplus I_{n-2},$
$G_1 W$ and $G_2 W$ are both rank-one psd matrices.
They are mapped into the parallel pair
$$ \Psi(G_1)=DG_1D=e_1(e_1+\hat{\eta}_2e_2)^\ast
\quad\hbox{and}\quad \Psi(G_2)=DG_2D=\hat{\eta}_2e_2(-e_1+\hat{\eta}_2e_2)^\ast
.$$
Again we may consider their parallelism restricted to $\IR^2$. Up to an inessential scaling,
the only possible unitary matrices $U,V\in\bM_2$ for which $U\Psi(G_1)V=\diag(\ast,0)\in\bM_2$ are  $U=\left(\begin{smallmatrix} 1 & 0 \\ 0 & \epsilon \end{smallmatrix}\right)$ and
$V = \frac{1}{\sqrt{\hat{\eta}_2 ^2+1}} \left( \begin{smallmatrix} 1 & \delta \hat{\eta}_2 \\ \hat{\eta}_2 & - \delta \end{smallmatrix} \right)$
with $|\epsilon|=|\delta|=1$. However, with these unitary matrices, the matrix $U \Psi(G_2)V$ is a scalar multiple of a psd if and only if $|\hat{\eta}_2|=1$. Likewise we can argue to show that $|\hat{\eta}_i|=1$. Hence, $D = I_n$, so $M$ and $N$ are indeed scalar multiples of unitary matrices.
\end{proof}

Recall that a cone $\mathcal{C}$ in $\bM_n(\IC)$ is a convex set which is closed under multiplication with nonnegative real numbers. Observe that we assume that cones are
pointed, i.e. they contain 0.
A cone (of affine dimension $d$) is maximal if it is not properly contained in a larger cone (of affine dimension~$d$). For a given $A \in \bM_{n}$, we shall make heavy use of the set $$P(A) = \{B \in \bM_n: B \hbox{ is parallel to } A\}.$$   One important observation is that if $T\colon  \bM_n \rightarrow \bM_n$ is a bijective linear map such that $T(A)$ is parallel to $T(B)$ whenever $A$ is parallel to $B$, then we have $$T(P(A)) \subseteq P(T(A)), \quad \hbox{and hence } \quad T(\Span P(A)) \subseteq \Span P(T(A)).$$

\subsection{The trace norm}

We start with a characterization of a singular matrix $A$ in terms of existence of a particular convex cone in $P(A)$ with respect to the trace norm.

\begin{lemma}\label{lem:trace-norm-cone-classification-of-singular}
Let $\|\cdot\|$ be the trace norm on $\bM_n$. Then $A \in \bM_n$ is singular if and only if there is a convex cone ${\mathcal C}$  such that  $\{A, B, -B\} \subseteq {\mathcal C} \subseteq P(A)$ for some $B$ not equal to a  multiple of $A$.
\end{lemma}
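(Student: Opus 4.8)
The plan is to first normalize $A$ and describe $P(A)$ explicitly, and then prove the two implications separately. Since the trace norm is unitarily invariant, the congruence $X\mapsto U_0XV_0^*$ preserves parallelism, carries $P(A)$ onto $P(U_0AV_0^*)$, sends convex cones to convex cones, and preserves the relation ``$B$ is (not) a scalar multiple of $A$''. Hence I may replace $A$ by any $U_0^*AV_0$; taking a singular value decomposition I assume $A=\Sigma:=\diag(s_1,\dots,s_n)$ with $s_1\ge\cdots\ge s_n\ge 0$. The pivotal observation is a rigidity statement for invertible $\Sigma$: if $U^*\Sigma V=\Sigma$ with $\Sigma>0$, then $V=U$. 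Indeed $\Sigma=U^*\Sigma V$ yields $\Sigma^2=U^*\Sigma^2U=V^*\Sigma^2V$, so both $U$ and $V$ commute with $\Sigma$; combined with $\Sigma V=U\Sigma$ this gives $V\Sigma=U\Sigma$, and invertibility forces $V=U$. Feeding this into Proposition~\ref{lem:P(A)}(b) with $k=n$ gives, for $A=\Sigma>0$, the clean description $P(A)=\{C\in\bM_n:\ \mu C\in\mathrm{psd}_n\text{ for some complex unit }\mu\}=\{C:\ |\tr C|=\|C\|\}$, the set of unit multiples of positive semidefinite matrices.

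For the forward implication, assume $A=\Sigma$ is singular, so $s_n=0$. Set $B=e_ne_n^*$ and take the convex cone $\mathcal C=\{\alpha A+\beta B:\ \alpha\ge 0,\ \beta\in\IR\}$, which contains $A,B,-B$, and in which $B$ is not a multiple of $A$ since their $(n,n)$ entries are $0$ and $1$. To check $\mathcal C\subseteq P(A)$, observe that $\alpha A+\beta B$ is diagonal; when $\beta\ge 0$ it is psd, and when $\beta<0$ the unitaries $U=I$ and $V=\diag(1,\dots,1,-1)$ satisfy $U^*AV=\Sigma$ (as $A_{nn}=0$) together with $U^*(\alpha A+\beta B)V\ge 0$, so Proposition~\ref{lem:P(A)}(b) applies in every case. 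Transporting $\mathcal C$ back by the decomposition unitaries produces the required cone for the original singular $A$.

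For the backward implication, assume $A=\Sigma>0$ and that a convex cone $\mathcal C\subseteq P(A)$ contains $A,B,-B$; I must show $B\in\IC A$. Because $\mathcal C$ is a convex cone, $A+sB\in\mathcal C\subseteq P(A)$ for every $s\in\IR$, so there is a complex unit $\mu_s$ with $\mu_s(A+sB)\ge 0$; in particular this matrix is Hermitian, which rearranges to $2i\,\mathrm{Im}(\mu_s)\,A=s\bigl(\bar\mu_sB^*-\mu_sB\bigr)$. If $B$ is Hermitian, then each $A+sB$ is a semidefinite Hermitian matrix; since $s\mapsto\lambda_{\min}(A+sB)$ is concave, positive at $s=0$, and tends to $-\infty$ in at least one direction (unless $B=0$), at the first parameter $s_0$ where it vanishes the matrix must be simultaneously psd and nsd, forcing $A+s_0B=0$ and hence $B\in\IR A\subseteq\IC A$. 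If $B$ is not Hermitian, then $A+sB$ is non-Hermitian for all $s\ne 0$, so $\mathrm{Im}(\mu_s)\ne 0$ and the displayed identity becomes $\bar\mu_sB^*-\mu_sB=c_sA$ with $c_s\ne 0$. One can then find $s_1,s_2$ with $\mu_{s_1}\bar\mu_{s_2}\notin\IR$: otherwise all $\mu_s$ would coincide up to sign with a single non-real $\mu_*$, making $\mu_*A$ Hermitian and thus $\mu_*$ real, a contradiction. Eliminating $B$ from the two relations at $s_1,s_2$ gives $(\mu_2\bar\mu_1-\mu_1\bar\mu_2)B^*=(\mu_2c_1-\mu_1c_2)A$ with nonzero coefficient, so $B^*\in\IC A$ and hence $B=\bar\lambda A\in\IC A$.

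The main obstacle is the backward implication. The linchpin is the rigidity $U^*\Sigma V=\Sigma\Rightarrow V=U$ for positive definite $\Sigma$, which is what makes $P(\Sigma)$ collapse to the unit multiples of $\mathrm{psd}_n$; without positive definiteness this description fails, which is precisely what allows the extra room in the singular case. The remaining delicate point is the non-Hermitian case, where one must guarantee the existence of two parameters $s_1,s_2$ giving ``incommensurate'' phases $\mu_{s_1},\mu_{s_2}$ so that the linear elimination is nondegenerate; the Hermitian case instead rests on the concavity-of-$\lambda_{\min}$ argument forcing $A+s_0B=0$ at the unique sign change.
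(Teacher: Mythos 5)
Your proof is correct, and while it opens exactly as the paper does, its second half runs on a genuinely different engine. The forward direction (take $B=UE_{nn}V$ and the cone $\{\alpha A+\beta B\}$) is the paper's construction verbatim. In the backward direction you and the paper share the decisive rigidity step: for invertible $\Sigma=\diag(s_1,\dots,s_n)>0$, the relation $U^*\Sigma V=\Sigma$ forces $U=V$, so that $P(\Sigma)$ collapses to the unit multiples of $\mathrm{psd}_n$. You package this once as a global description of $P(A)$, whereas the paper re-derives it pointwise for each $tA+B$ via the block-diagonal structure of $X_t,Y_t$; the content is the same. From there the routes diverge. The paper conjugates by $A^{-1/2}$, writes $A^{-1/2}BA^{-1/2}=e^{i\phi}P$ with $P$ psd, and compares the arguments of the eigenvalues $t+e^{i\phi}\lambda_j$ for $t$ between two distinct eigenvalues of $P$ to force $e^{i\phi}=1$, so $B$ is psd; running the same argument on $-B$ gives the contradiction. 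You instead never conjugate: you exploit only the Hermiticity of $\mu_s(A+sB)$, splitting on whether $B$ is Hermitian. In the Hermitian case the concavity of $\lambda_{\min}$ along the full line $s\in\IR$ (this is where $-B\in\mathcal C$ enters for you, playing the role of the paper's second pass) forces $A+s_0B=0$; in the non-Hermitian case you eliminate $B^*$ from the identities $\bar\mu_sB^*-\mu_sB=c_sA$. What the paper's version buys is a single uniform eigenvalue argument with no case split; what yours buys is that it stays entirely at the level of the Hermiticity constraint, avoids the $A^{-1/2}$ congruence, and isolates precisely where $B\notin\IC A$ is used. Two of your steps deserve one more line each, though both are sound: that $A+s_0B$ is also nsd follows because $A+sB$ is nsd for $s$ just past $s_0$ and the nsd cone is closed; and that all $\mu_s=\pm\mu_*$ implies $\mu_*A$ is Hermitian follows algebraically by subtracting the Hermiticity identities $\mu_*(A+s_1B)=\bigl(\mu_*(A+s_1B)\bigr)^*$ and $\mu_*(A+s_2B)=\bigl(\mu_*(A+s_2B)\bigr)^*$ to get $\mu_*B$ Hermitian, hence $\mu_*A$ Hermitian, hence $\mu_*\in\IR$ since $A=\Sigma$ is real diagonal and invertible.
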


\begin{proof} Let $A = U\diag(a_1, \dots a_n)V$ be the singular value decomposition, with $a_1\ge\dots\ge a_n\ge0$. If $A$ is singular, then $a_n = 0$. Let $B = UE_{nn}V$. Then
${\mathcal C} = \{t_1 A + t_2 B + t_3(-B):  t_1, t_2, t_3 \ge 0\}$ is the desired cone.

If $A$ is invertible then $a_n \ne 0$. Assume to the contrary that the cone ${\mathcal  C}$ and matrix $B$ do exist. Since $U^\ast P(A)V^\ast=P(U^\ast AV^\ast)$ we may apply the bijection $X\mapsto U^\ast XV^\ast$ and assume from the start that $A=\diag(a_1,\dots,a_n)$. Then $tA + B \in{\mathcal C}\subseteq P(A)$ for all $t \ge 0$, so by Proposition \ref{lem:P(A)} (b), there are unitary $X_t,Y_t$ and a complex unit $e^{i\phi_t}$ with
\begin{equation}\label{eq:trace-norm}
X_t AY_t=A\quad\hbox{ and }\quad X_t(tA + B)Y_t\in e^{i\phi_t}\mathrm{psd}_n,\qquad t\ge0.
\end{equation}
By grouping equal eigenvalues in $A$ and considering $Y_t^\ast A^\ast A Y_t=(X_tAY_t)^\ast(X_tAY_t)=A^\ast A$, we see that $Y_t$, and similarly $X_t$, inherit the block-diagonal structure  $Y_t=Y_{1,t}\oplus\dots\oplus Y_{j,t}$ and $X_t=X_{1,t}\oplus\dots\oplus X_{j,t}$ with block sizes equal to the multiplicities of the corresponding eigenvalues in $A$. Moreover,  since the eigenvalues of $A$ are positive,  $X_{k,t}=Y_{k,t}^\ast$ and hence $X_t=Y_t^\ast$.  Using this in~\eqref{eq:trace-norm} we deduce that $A^{1/2}(t I_n+A^{-1/2}BA^{-1/2})A^{1/2}=tA + B\in  e^{i\phi_t}\mathrm{psd}_n$, and hence also $$t I_n+A^{-1/2}BA^{-1/2}\in e^{i\phi_t}\mathrm{psd}_n;\qquad t\ge0.$$ At $t=0$ we get $A^{-1/2}BA^{-1/2}=e^{i\phi} P$ for some psd matrix $P$ and hence $$tI_n+e^{i\phi} P\in e^{i\phi_t} \mathrm{psd}_n.$$ Thus for each $t \geq 0$, the nonzero eigenvalues of $tI_n+e^{i\phi} P$ all have the same argument.  Since $B$ is not a scalar multiple of $A$, $P \geq 0$ is not a scalar matrix.  Let $0 \leq \lambda_1 < \lambda_2$ be eigenvalues of $P$ and choose $t \in (\lambda_1, \lambda_2)$.  Then $t +e^{i\phi} \lambda_1$ and  $t +e^{i\phi} \lambda_2$ have the same argument, so $e^{i\phi} = 1$. Consequently $B$ is psd. But applying the same reasoning to $-B$, we see  that $-B$ is psd, which is a contradiction. Indeed, if $A$ is invertible, such a cone ${\mathcal C}$ cannot exist.
\end{proof}

\begin{proof}[Proof of Theorem \ref{main2} for the trace norm]
We first show that $T$ maps singular matrices into singular ones. If $A \in \bM_n$ is singular, there is a cone ${\mathcal C}$ of the form described in Lemma~\ref{lem:trace-norm-cone-classification-of-singular}. So,  $\{T(A), T(B), -T(B)\} \subseteq T({\mathcal C}) \subseteq T(P(A))\subseteq P(T(A))$. Since $T({\mathcal C})$ is a cone, $T(A)$ is singular again by Lemma~\ref{lem:trace-norm-cone-classification-of-singular}. Thus $T$ is a bijective map sending singular matrices to singular matrices. By  Dieudonn\'{e}'s result~\cite{D},
$T$ has the standard form $A\mapsto MAN$ or $A \mapsto MA^tN$ for some invertible $M,N \in \bM_n$. By Lemma~\ref{lem:rank-one=preservers} they are both multiples of a unitary matrix.\end{proof}

\subsection{The Ky-Fan norm for $1 < k < n$} The proof for the case when $1 < k < n$ is a bit involved.
Let $T\colon \bM_n\rightarrow \bM_n$ be a bijective linear map preserving parallel pairs relative to $\|\cdot\|_{(k)}$. We give the outline of the proof in various steps below, which will be proved subsequently.

{\bf Step 1.} We will show that $\Span P(A)$ (i.e., the complex-linear span of $P(A)$) has a minimum dimension equal to $k^2 + (n-k)^2$, and equality holds if and only if $s_k(A) > s_{k+1}(A)$.  These conditions hold if and only if there are unitary $U, V$ such that $U (\Span P(A)) V = \bM_k\oplus \bM_{n-k}$. Consequently, for such an $A$, $X= T^{-1}(A)$ must also satisfy $s_k(X) > s_{k+1}(X)$, and there are unitary $P, Q$ such that $P(\Span P(X)) Q = \bM_k \oplus \bM_{n-k}$.

{\bf Step 2.} We use the results in Step 1 to analyze the behaviour of the set $$\cC = \{X_1 \oplus X_2\in\bM_k\oplus\bM_{n-k}: X_1 \hbox{ is a positive semidefinite matrix with } s_k(X_1) \ge  s_1(X_2)\}$$ under $T$. It turns out that $\cC$ is a cone and under the map $T$, the relative interior (relative boundary) of $\cC$ are mapped exactly to the relative interior (relative boundary).

{\bf Step 3.} Another important set useful in our analysis will be $$\Pert(X) = \{Z \in \partial \cC : X \pm tZ \in \cC \text{ for sufficiently small } t>0 \}.$$  We will analyze when the real dimension of $\Span_{\IR} (\Pert(X))$ (i.e., the real-linear span of $\Pert(X)$) equals one; this will lead us to prove that the rank one matrices in $\partial \cC$ are mapped exactly to rank one matrices.

{\bf Step 4.} We will prove that $T$ sends rank one matrices to rank one matrices and use the standard results to conclude Theorem \ref{main2}.

We start by counting the dimension of the span of $P(A)$ subject to different choices for the matrix~$A$. In part (b)  we will encounter matrices of the form $W = I_p \oplus W_1 \oplus I_{n-q} \in \bM_p \oplus \bM_{q-p} \oplus \bM_{n-q}$ where $0 \le p < q \le n$. At the extremal cases when $p=0$ or when $q=n$  we agree that $I_0$ and $\bM_0$ indicate that the  corresponding summand is omitted.

\begin{lemma}\label{Lem:dimP(A)} Let $A \in \mathbb \bM_n(\IC)$ and let $U, V \in\bM_n(\IC)$ be unitary matrices such that $U^*AV = \diag(s_1(A), \dots, s_n(A))$. 
Then, the following holds.
\begin{itemize}
\item[{\rm (a)}] Suppose $s_k(A)> s_{k+1}(A)$. Then $B \in P(A)$ if and only if $U^*BV = B_1 \oplus B_2$ such that $\mu B_1 \in \bM_k(\IC)$ is psd for some complex unit $\mu$ and $|\tr B_1| = \|B\|_{(k)}$. Moreover, $$ \Span P(A) =  U(\bM_k(\IC) \oplus \bM_{n-k}(\IC))V^*.$$ Consequently, $\Span P(A)$ has dimension $k^2 + (n-k)^2 = n^2 -2kn + 2k^2$.
\item[{\rm (b)}]  Suppose $s_k(A) = s_{k+1}(A)$. Let $p\ge0$ and $q\le n$ be the smallest integer
and the largest  integer such that $s_{p+1}(A) = s_k(A) = s_q(A)$.
\begin{itemize}
\item If $q \ne n$ or $A$ is invertible, then $B \in P(A)$ if and only if there is a unitary $W$
of the form $W = I_p \oplus W_1 \oplus I_{n-q} \in \bM_p(\IC) \oplus \bM_{q-p}(\IC) \oplus \bM_{n-q}(\IC)$ such that $B = U W (B_1 \oplus B_2) W^* V^*$, $\mu B_1\in \bM_k(\IC)$ is psd for some complex unit $\mu$, and $\|B_1\|_{(k)} = \|B\|_{(k)}$.
\item If $q=n$ and $A$ is singular, then $B \in P(A)$ if and only if there exist unitary $W_2, W_3 \in \bM_{n-p}(\IC)$ such that $B = U (I_p \oplus W_2) (B_1 \oplus B_2) (I_p \oplus W_3) V^*$, $\mu B_1\in \bM_k(\IC)$ is psd for some complex unit $\mu$, and $\|B_1\|_{(k)} = \|B\|_{(k)}$.
\end{itemize}
 Consequently, $\Span P(A)$ has dimension at least $k^2 + 2k(q-k)+(n-k)^2$.
\end{itemize}
\end{lemma}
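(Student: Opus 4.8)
The plan is to reduce everything to a diagonal $A$ and then read off both the characterization and the dimension from the equivalence (a)$\Leftrightarrow$(b) of Proposition~\ref{lem:P(A)}. Since every Ky-Fan norm is unitarily invariant, $X\mapsto U^*XV$ preserves parallelism, so $P(U^*AV)=U^*P(A)V$ and $\dim\Span P(A)$ is unchanged; hence I may assume from the start that $A=\diag(s_1,\dots,s_n)$ and recover the general statement by conjugating back at the end. Throughout I use the coordinate decomposition $\IC^n=\cV_>\oplus\cV_=\oplus\cV_<$ into $\Span\{e_1,\dots,e_p\}$, $\Span\{e_{p+1},\dots,e_q\}$, $\Span\{e_{q+1},\dots,e_n\}$, collecting the basis vectors on which $A$ acts by a singular value that is $>s_k$, $=s_k$, $<s_k$ respectively. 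The only difference between the two parts is whether $\cV_=$ straddles the index $k$: in~(a) we have $q=k$ (so $\cV_=\subseteq\Span\{e_1,\dots,e_k\}$), whereas in~(b) we have $p<k<q$.

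For part (a), given $B\in P(A)$, Proposition~\ref{lem:P(A)}(b) supplies unitaries $\tilde U,\tilde V$ and a unit $\mu$ with $\tilde U^*A\tilde V=\diag(s_1,\dots,s_k)\oplus A_2$ and $\tilde U^*B\tilde V=B_1'\oplus B_2'$, where $\mu B_1'$ is psd with eigenvalues $s_1(B),\dots,s_k(B)$. Because $s_k>s_{k+1}$, the top-$k$ right (resp.\ left) singular subspace of $A$ is the unique $\Span\{e_1,\dots,e_k\}$; since $\mathcal R=\Span\{\tilde v_1,\dots,\tilde v_k\}$ is such a subspace, $\tilde V=\tilde V_1\oplus\tilde V_2$ and $\tilde U=\tilde U_1\oplus\tilde U_2$ relative to the $k\mid(n-k)$ split. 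Then $\tilde U_1^*\diag(s_1,\dots,s_k)\tilde V_1=\diag(s_1,\dots,s_k)$ with a positive definite diagonal, and solving $\tilde V_1=\diag(s_1,\dots,s_k)^{-1}\tilde U_1\diag(s_1,\dots,s_k)$ forces $\tilde U_1=\tilde V_1$ commuting with $\diag(s_1,\dots,s_k)$; hence $B=B_1\oplus B_2$ with $\mu B_1=\tilde U_1(\mu B_1')\tilde U_1^*$ psd and $|\tr B_1|=\|B\|_{(k)}$. The converse is immediate from Proposition~\ref{lem:P(A)}(b). For the span, $P(A)\subseteq\bM_k\oplus\bM_{n-k}$, while the psd matrices $B_1\oplus0$ already span $\bM_k\oplus0$ (Hermitian matrices span $\bM_k$ over $\IC$) and the matrices $cI_k\oplus B_2$ with $c$ large enough give $0\oplus\bM_{n-k}$; so $\Span P(A)=\bM_k\oplus\bM_{n-k}$, of dimension $k^2+(n-k)^2$.

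For part (b) the same Proposition supplies $\tilde U,\tilde V$, but now $\cV_=$ straddles $k$. The key structural step is to show $\tilde U,\tilde V$ are block diagonal for $\cV_>\oplus\cV_=\oplus\cV_<$: the identity $\tilde U^*A\tilde V=\diag(s_1,\dots,s_k)\oplus A_2$ makes $\mathcal R=\Span\{\tilde v_1,\dots,\tilde v_k\}$ an $A^*A$-invariant subspace whose eigenvalues are $s_1^2,\dots,s_k^2$, so $\mathcal R=\cV_>\oplus(\mathcal R\cap\cV_=)$ with $\dim(\mathcal R\cap\cV_=)=k-p$; this gives the claimed block structure and, on $\cV_>$, the relation $\tilde U_>=\tilde V_>$ commuting with $A|_{\cV_>}$. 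If $s_k>0$ (exactly the case $q\ne n$ or $A$ invertible), then $A|_{\cV_=}=s_kI$ forces $\tilde U_=^*\tilde V_==I_{k-p}\oplus R$; taking $W=I_p\oplus\tilde U_=\oplus I_{n-q}$ makes both $W^*\tilde U$ and $W^*\tilde V$ $k\mid(n-k)$ block diagonal, whence $B=W(B_1\oplus B_2)W^*$ with $\mu B_1$ psd (a unitary conjugate of $\mu B_1'$) and $\|B_1\|_{(k)}=\|B\|_{(k)}$. If $s_k=0$ (i.e.\ $q=n$ and $A$ singular), then $A|_{\cV_=}=0$ decouples the left and right rotations, and absorbing $\tilde U_>=\tilde V_>$ into the top block gives the form $B=(I_p\oplus W_2)(B_1\oplus B_2)(I_p\oplus W_3)$. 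Both converses follow by feeding $\tilde U=\tilde V=W$ (resp.\ $\tilde U=I_p\oplus W_2$, $\tilde V=I_p\oplus W_3^*$) into Proposition~\ref{lem:P(A)}(b), using that these unitaries fix $A$.

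For the dimension bound I exploit that every $W=I_p\oplus W_1\oplus I_{n-q}$ commutes with $A$, so $B\mapsto WBW^*$ maps $P(A)$ into itself and $\Span P(A)$ is invariant under all such conjugations. Starting from $\bM_k\oplus\bM_{n-k}\subseteq\Span P(A)$ (same argument as in~(a)), conjugating a diagonal unit $E_{ii}$ with $p<i\le k$ produces every $ww^*$ with $w\in\cV_=$, spanning all matrices supported on $\cV_=$, and conjugating $E_{ij}$ with $i\le p<j\le k$ produces every $e_iw^*$ with $w\in\cV_=$ (and, by adjoints, every $we_i^*$). Collecting the generated entry positions, $\Span P(A)$ contains all matrices supported on $\{1,\dots,k\}^2\cup\{k+1,\dots,n\}^2\cup(\{1,\dots,k\}\times\{k+1,\dots,q\})\cup(\{k+1,\dots,q\}\times\{1,\dots,k\})$, a subspace of dimension $k^2+(n-k)^2+2k(q-k)$, which is the stated lower bound. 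I expect the main obstacle to be the structural part of~(b): pinning down the block-diagonal form of $\tilde U,\tilde V$ from the non-unique top-$k$ singular subspace, cleanly separating the $s_k>0$ and $s_k=0$ regimes, and assembling the $W_1$ (conjugation) and $(W_2,W_3)$ forms; by contrast, once conjugation-invariance of $\Span P(A)$ is in hand, the dimension count is routine.
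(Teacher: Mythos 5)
Your overall route is the paper's: reduce to diagonal $A$, feed $B$ into Proposition~\ref{lem:P(A)}(b), pin down the intertwining unitaries from the singular-value gaps, and obtain the span and the dimension bound from psd matrices together with rotations commuting with $A$. Part~(a) and the dimension count are fine; your conjugation-invariance argument generates exactly the subspace $\bM_q\oplus 0_{n-q}+0_k\oplus\bM_{n-k}$ that the paper exhibits directly by showing $xx^*\oplus 0_{n-q}\in P(A)$ for unit $x\in\IC^q$. (One cosmetic slip there: $\Span P(A)$ is not obviously closed under adjoints, so obtain $we_i^*$ by conjugating $E_{ji}$ rather than ``by adjoints''.)

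The one genuine gap is in the ``key structural step'' of part~(b). From $\tilde U^*A\tilde V=\diag(s_1,\dots,s_k)\oplus A_2$ you correctly conclude that each of the first $k$ columns of $\tilde V$ is an eigenvector of $A^*A$, hence lies in $\cV_>$ or $\cV_=$, and that $\mathcal{R}=\cV_>\oplus(\mathcal{R}\cap\cV_=)$. But this does \emph{not} yet give that $\tilde V$ is block diagonal for $\cV_>\oplus\cV_=\oplus\cV_<$: the last $n-k$ columns are only constrained to span $\mathcal{R}^\perp=\bigl(\cV_=\ominus(\mathcal{R}\cap\cV_=)\bigr)\oplus\cV_<$, and individually they may mix $\cV_=$ with $\cV_<$, since nothing in Proposition~\ref{lem:P(A)}(b) diagonalizes $A_2$. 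Without block-diagonality of these trailing columns, $W^*\tilde V$ need not be $k\mid(n-k)$ block diagonal and the factorization $B=W(B_1\oplus B_2)W^*$ does not follow. The repair is easy but must be said: because $A_2$ and $B_2'$ are unconstrained, you may replace $\tilde U,\tilde V$ by $\tilde U(I_k\oplus\,\cdot\,)$, $\tilde V(I_k\oplus\,\cdot\,)$ so that the trailing columns are adapted to the splitting; equivalently, do what the paper does and normalize so that $X^*AY=A$ is the \emph{full} diagonal matrix, whence $Y$ (and likewise $X$) commutes with $A^*A$ and is genuinely block diagonal over the groups of equal singular values, with $X_j=Y_j$ on every block whose singular value is nonzero. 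With that repair, the rest of your argument --- the step $\tilde U_=^*\tilde V_==I_{k-p}\oplus R$ when $s_k>0$, the decoupled $(W_2,W_3)$ form when $s_k=0$, and both converses --- goes through exactly as in the paper.
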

\begin{proof}
Since $P(XAY) = XP(A)Y$ for all unitaries $X,Y \in \bM_n$, we may assume that $A$ is the diagonal matrix $\diag(s_1(A), \dots, s_n(A))$. Let $\{e_1, \dots, e_n\}$ be the standard basis for $\IC^n$. Then $Ae_j  = s_j(A) e_j$ for $j = 1, \dots, n$.

(a) Suppose $s_k(A) > s_{k+1}(A)$. Let $B \in P(A)$. By Proposition \ref{lem:P(A)}, there are unitary $U, V \in \bM_n$ and a complex unit $\mu$ such that $U^*AV = A_1 \oplus A_2$ with $A_1 = \diag(s_1(A), \dots, s_k(A))$ and $U^*BV = B_1 \oplus B_2$ such that
$\mu B_1 \in \bM_k$ is  psd with eigenvalues $s_1(B), \dots, s_k(B)$. Since $s_k(A) > s_{k+1}(A)$,  the first $k$ columns of $V$ are eigenvectors corresponding to the $k$ largest eigenvalues of $A^*A$. So, the first $k$ columns of $V$ lie in the span of $\{e_1, \dots, e_k\}$. Thus, $V = V_1 \oplus V_2$ with $V_{1} \in \bM_k$. Similarly, we can show that $U = U_{1} \oplus U_{2}$ with $U_{1} \in \bM_k$. Then $U_1^* A_1 V_1 = A_1$, so $V_1^* A_1^* A_1 V_1 = A_1^* A_1$.  Thus $V_1$ commutes with
$A_1^2$, so $V_1$ commutes with $A_1$ and $A_1 = U_1^* A_1 V_1 = U_1^* V_1 A_1.$  Since $A_1$ is invertible, $U_1 = V_1$.

Hence $B = (U_{1}B_1 V_1^*)\oplus (U_{2} B_2 V_{2}^*)$. So, every matrix $B$ in $P(A)$ has the form  $\hat B_1 \oplus \hat B_2$, where $\hat B_1$ is a scalar multiple of a psd. Hence $\Span P(A) \subseteq \bM_k \oplus \bM_{n-k}$. Conversely, note that every  psd  matrix $B = B_1 \oplus B_2$ satisfying $s_k(B_1) \ge s_1(B_2)$ lies in $P(A)$. The  span of such matrices is $\bM_k \oplus \bM_{n-k}$, so (a) follows.

(b)  Suppose $s_k(A) = s_{k+1}(A)$ and $p, q$ satisfy the hypothesis. By Proposition \ref{lem:P(A)}, we have $B \in P(A)$ if and only if there are unitary $X,Y$ such that $X^\ast AY=A=\diag(s_1(A),\dots,s_n(A))$ and $X^\ast BY=B_1\oplus B_2\in \bM_k\oplus \bM_{n-k}$, with  $\mu B_1$ psd for some complex unit  $\mu$ and $\|B_1\|_{(k)}=\|B\|_{(k)}$.   By grouping the same  singular values in $A$ and considering $Y^\ast A^\ast AY=(X^\ast AY)^\ast(X^\ast AY)=A^\ast A$ we see that  $Y$, and similarly $X$, inherit the  block-diagonal structure $X=X_1\oplus \dots \oplus X_t$ and $Y=Y_1\oplus \dots\oplus Y_t$ with block sizes equal to the multiplicities of the corresponding singular values in $A$. Moreover,  if the singular value for the $j$th group is nonzero, then $X_j=Y_j$.

Therefore  $B\in P(A)$ if and only if $$B=(X_1\oplus X_2\oplus\dots \oplus X_t)(\mu B_1\oplus B_2)(X_1\oplus X_2\oplus\dots\oplus Y_t)^\ast$$ for some complex unit $\mu$ and psd $B_1\in\bM_k$ with $\|B_1\|_{(k)}=\|B\|_{(k)}$, where $X_j, Y_j$ are all unitary, and $Y_t=X_t$ if  $s_n(A)>0$, i.e., if $A$ is invertible.

Let us index $m \geq 1$ be such that $X_1 \oplus \dots \oplus X_m$ is of size $q$.
Let $W = I_p \oplus X_m \oplus I_{n-q}$, where $I_0$ indicates that that summand is missing.  Let $\hat{X}_j = X_j$ for $j \ne m$ and $\hat{X}_m = I_{q-p}$.  Then
if $q \ne n$, $$B = W (\oplus_{j=1}^{t-1} \hat{X}_j \oplus X_t) (\mu B_1 \oplus B_2) (\oplus_{j=1}^{t-1} \hat{X}_j \oplus Y_t)^* W^* = W (\mu \hat{B}_1 \oplus \hat{B}_2) W^*,$$
where $\hat{B}_1 \in \bM_k$ is psd with $\|\hat{B}_1\|_{(k)} = \|B\|_{(k)}$.  When $q=n$ the above still holds if $A$ is invertible, but if $A$ is singular with $q=n$ then
$$B = (I_p \oplus X_m) (\mu \hat{B}_1 \oplus \hat{B}_2) (I_p \oplus Y_m)$$
for some unitary $X_m, Y_m \in \bM_{n-p}$.

For the assertion about $\Span P(A)$, let $x \in \IC^q$ be a unit vector and let $B= xx^* \oplus 0_{n-q}$.  There exists a unitary $W_1 \in \bM_{q-p}$ such that $(I_p \oplus W_1)x \in \IC^{p+1} \oplus 0_{q-p-1} \subseteq\IC^k\oplus0_{q-k}$.  Letting $W = I_p \oplus W_1 \oplus I_{n-q}$ we see that $WBW^* = yy^* \oplus 0_{n-k}$ for some unit vector $y \in \IC^k$, so $B \in P(A)$.  Thus $\Span P(A)$ contains $\bM_q \oplus 0_{n-q}$.  Since $t I_k \oplus B_2 \in P(A)$ for any $B_2 \in \bM_{n-k}$ if $t > 0$ is sufficiently large, $\Span P(A)$ also contains $0_k \oplus \bM_{n-k}$.  Thus $\dim \Span P(A)$ is at least $k^2 + 2k(q-k) + (n-k)^2$.
\end{proof}

As a corollary, we get an equivalent condition for  $\Span P(A) = \Span P(B)$ for $A, B \in \bM_n$ with $s_k(A) > s_{k+1}(A)$.
\begin{corollary} \label{cor:SASB}
Let $A, B \in \bM_n(\IC)$ with $s_k(A) > s_{k+1}(A)$, and $A = A_1 \oplus A_2$
with $A_1 \in \bM_k(\IC)$ such that  $\sum_{j=1}^k s_j(A_1) = \|A\|_{(k)}$. Then,
$$\Span P(A) = \Span P(B)$$ if and only if $s_k(B) > s_{k+1}(B)$
and $B =  B_1 \oplus B_2$ is such that

{\rm (i)} $B_1 \in \bM_k(\IC)$ satisfies $\sum_{j=1}^ks_j(B_1) =\|B\|_{(k)}$,
 or

 {\rm (ii)} $n = 2k$,  and $B_2 \in \bM_k(\IC)$ satisfies $\sum_{j=1}^ks_j(B_2) =\|B\|_{(k)}$.
\end{corollary}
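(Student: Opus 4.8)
The plan is to reduce the statement to a classification of the congruence unitaries appearing in Lemma~\ref{Lem:dimP(A)}(a). First I would record the normal form of $\Span P(A)$ itself. Since $s_k(A) > s_{k+1}(A)$ and $A = A_1 \oplus A_2$ with $\|A_1\|_{(k)} = \|A\|_{(k)}$, the strict gap forces the singular values of $A_1$ to be exactly $s_1(A), \dots, s_k(A)$; hence a singular value decomposition of $A$ can be assembled from those of $A_1$ and $A_2$ using block-diagonal unitaries $U = U_1 \oplus U_2$ and $V = V_1 \oplus V_2$. Feeding this into Lemma~\ref{Lem:dimP(A)}(a) gives $\Span P(A) = U(\bM_k \oplus \bM_{n-k})V^* = \bM_k \oplus \bM_{n-k} =: W$. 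Thus the corollary amounts to deciding exactly which $B$ satisfy $\Span P(B) = W$.

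For the ``if'' direction I would treat (i) and (ii) uniformly. In case (i) the same assembly as for $A$ produces block-diagonal SVD unitaries, so $\Span P(B) = W$. In case (ii), where $n = 2k$ and it is $B_2$ that carries the top $k$ singular values, the SVD unitaries can be taken block-anti-diagonal, i.e. of the form $\Pi(\hat U_1 \oplus \hat U_2)$ with $\Pi$ the block swap, and a direct computation again gives $U(\bM_k \oplus \bM_k)V^* = \bM_k \oplus \bM_k = W$.

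The substantive direction is ``only if''. Assuming $\Span P(B) = W$, a dimension count first settles the singular-value gap: by Lemma~\ref{Lem:dimP(A)}(b), if $s_k(B) = s_{k+1}(B)$ then $\dim \Span P(B) \geq k^2 + 2k(q-k) + (n-k)^2 > k^2 + (n-k)^2 = \dim W$ since $q \geq k+1$, a contradiction; hence $s_k(B) > s_{k+1}(B)$ and Lemma~\ref{Lem:dimP(A)}(a) yields $W = U(\bM_k \oplus \bM_{n-k})V^*$ for SVD unitaries $U, V$ of $B$. The crux, which I expect to be the main obstacle, is to classify all unitaries $U, V$ with $U W V^* = W$. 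Here I would exploit the commutant description $W = \{X : JX = XJ\}$, where $J = I_k \oplus (-I_{n-k})$. The inclusion $UWV^* \subseteq W$ translates, after multiplying by $U^*$ on the left and $V$ on the right, into $(U^*JU)X = X(V^*JV)$ for all $X \in W$; testing on $X = I \in W$ forces $U^*JU = V^*JV =: K$, and then $K$ commutes with all of $W$, so $K \in W' = \IC I_k \oplus \IC I_{n-k}$. Since $K$ is unitarily similar to $J$, its eigenvalues are $\pm 1$ with multiplicities $k$ and $n-k$, which pins down $K = \pm J$; note that this step couples $U$ and $V$, ruling out any ``mixed'' case.

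Finally I would unwind the two possibilities. If $K = J$, then $U$ and $V$ commute with $J$ and are therefore block-diagonal, so $B = U\,\diag(s_1(B), \dots, s_n(B))\,V^* = B_1 \oplus B_2$ with $B_1 \in \bM_k$ carrying the top $k$ singular values, which is condition (i). If $K = -J$, matching the multiplicity of the eigenvalue $+1$ forces $k = n-k$, i.e. $n = 2k$; then $U, V$ anti-commute with $J$, hence are block-anti-diagonal, and the analogous computation gives $B = B_1 \oplus B_2$ with the top $k$ singular values sitting in $B_2$, which is condition (ii). The commutant trick is what makes this classification clean and self-coupling, and it is the one place where real work beyond bookkeeping is required.
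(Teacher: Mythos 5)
Your proof is correct and follows the same route as the paper's: show both spans equal $\bM_k \oplus \bM_{n-k}$, use the dimension bound in Lemma~\ref{Lem:dimP(A)}(b) to force $s_k(B) > s_{k+1}(B)$, and then classify the unitaries $X,Y$ with $X(\bM_k\oplus\bM_{n-k})Y^* = \bM_k\oplus\bM_{n-k}$ as block-diagonal or (when $n=2k$) block-anti-diagonal. The only real difference is that the paper simply asserts this last classification (``It follows that both $X, Y$ are block-diagonal unitaries, or $n=2k$ and \dots''), whereas your commutant argument with $J = I_k \oplus (-I_{n-k})$ actually proves it --- a welcome filling-in of detail rather than a genuinely different approach.
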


\begin{proof}
Let $U = U_1 \oplus U_2, V= V_1 \oplus V_2 \in \bM_k \oplus \bM_{n-k}$ be unitary matrices such that $U^*AV = \diag (s_1(A), \dots, s_n(A))$. By Lemma~\ref{Lem:dimP(A)}, $\Span P(A)$ equals $\bM_k \oplus \bM_{n-k}$. If $\Span P(A) = \Span P(B)$ then both spans have the minimal possible dimension, so $s_k(B) > s_{k+1}(B)$.

Let $X, Y$ be unitaries such that $X^*BY = \diag (s_1(B), \dots, s_n(B))$.  By Lemma~\ref{Lem:dimP(A)}, $X(\bM_k\oplus \bM_{n-k})Y^* = \Span P(B) = \bM_k \oplus \bM_{n-k}$. It follows that both $X, Y$ are block-diagonal unitaries, or $n=2k$ and $JX, JY$ are block-diagonal unitaries, where
$J = \left[ \begin{smallmatrix} 0_k & I_k \\ I_k & 0_k \end{smallmatrix} \right]$.  Thus $B = B_1 \oplus B_2$ and either condition (i) or (ii) holds.
The converse follows readily from Lemma~\ref{Lem:dimP(A)}.
\end{proof}

\begin{corollary}\label{cor:minimal}
Suppose a bijective linear $T\colon \bM_n\to\bM_n$ preserves parallel pairs relative to Ky-Fan $k$-norm for $1< k< n$. If $T(X)$ is such that $s_{k}(T(X))>s_{k+1}(T(X))$, then $s_{k}(X)>s_{k+1}(X)$ and $T(\Span P(X))= \Span P(T(X))$.
\end{corollary}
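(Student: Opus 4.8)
The plan is to run a pure dimension-counting argument, leveraging the sharp dichotomy established in Lemma~\ref{Lem:dimP(A)}: the quantity $\dim \Span P(A)$ equals $k^2+(n-k)^2$ precisely when $s_k(A)>s_{k+1}(A)$, and is strictly larger otherwise. In particular $k^2+(n-k)^2$ is the global minimum of $\dim \Span P(\cdot)$ over all of $\bM_n$, attained exactly on the set where the $k$th and $(k+1)$th singular values separate. The corollary should then fall out by sandwiching the dimension of $\Span P(X)$ between this minimum and the dimension of $\Span P(T(X))$.

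Concretely, I would first invoke the general inclusion recorded just before this subsection: since $T$ is a bijective linear preserver of parallel pairs, $T(P(X))\subseteq P(T(X))$, and hence $T(\Span P(X))\subseteq \Span P(T(X))$. Because $T$ is a linear bijection it preserves dimension, so
$$\dim \Span P(X)=\dim T(\Span P(X))\le \dim \Span P(T(X)).$$
Next I would feed in the hypothesis $s_k(T(X))>s_{k+1}(T(X))$: by Lemma~\ref{Lem:dimP(A)}(a) the right-hand side equals the minimal value $k^2+(n-k)^2$, so $\dim \Span P(X)\le k^2+(n-k)^2$. Since this is also the minimum possible value of $\dim \Span P(X)$, we conclude $\dim \Span P(X)=k^2+(n-k)^2$. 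Two consequences follow at once. On one hand, the inclusion $T(\Span P(X))\subseteq \Span P(T(X))$ is now an inclusion of subspaces of equal finite dimension, hence the equality $T(\Span P(X))=\Span P(T(X))$. On the other hand, attaining the minimal dimension is incompatible with the strictly larger lower bound of Lemma~\ref{Lem:dimP(A)}(b), which would govern the case $s_k(X)=s_{k+1}(X)$; therefore $s_k(X)>s_{k+1}(X)$.

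There is no serious obstacle here once Lemma~\ref{Lem:dimP(A)} is in hand: the entire force of the corollary is carried by the minimality and rigidity of the dimension formula. The only point deserving a moment's care is confirming that the lower bound in part~(b) is \emph{strictly} greater than $k^2+(n-k)^2$, since that strict gap is exactly what converts the sandwich inequality into the desired pair of equalities. This holds because $s_k(X)=s_{k+1}(X)$ forces the index $q$ of Lemma~\ref{Lem:dimP(A)}(b) to satisfy $q\ge k+1$, so the extra term $2k(q-k)\ge 2k>0$.
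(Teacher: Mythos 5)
Your proposal is correct and follows essentially the same route as the paper: both arguments combine the inclusion $T(\Span P(X))\subseteq \Span P(T(X))$ with the fact from Lemma~\ref{Lem:dimP(A)} that $\dim\Span P(T(X))$ attains the global minimum $k^2+(n-k)^2$, forcing equality of dimensions and hence of subspaces, and ruling out $s_k(X)=s_{k+1}(X)$ via the strictly larger lower bound in part~(b). Your explicit check that $q\ge k+1$ makes the gap $2k(q-k)\ge 2k>0$ strict is a welcome addition that the paper leaves implicit.
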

\begin{proof} Since a preserver $T$ satisfies $T(P(X))\subseteq P(T(X))$, and $T$ is bijective, one has $$\dim \Span P(X) = \dim  T(\Span P(X))\le \dim \Span P(T(X)).$$  Lemma~\ref{Lem:dimP(A)} shows that $\Span P(T(X))$ has the minimal possible dimension, and hence so does $\Span P(X)$, whence $s_k(X) > s_{k+1}(X)$, and consequently $T(\Span P(X))= \Span P(T(X))$.\end{proof}	

Now, we proceed towards proving that a bijective linear map which preserves parallel pairs relative to a Ky-Fan $k$-norm for $1<k< n$, is bijective on a subspace $\bM_k \oplus \bM_{n-k}$, and will map the cone $\cC$ onto itself, where $\cC$ is defined as follows.
\begin{definition}\label{defn1} Define $$\cC = \{X_1 \oplus X_2\in\bM_k\oplus\bM_{n-k}: X_1 \hbox{ is a positive semidefinite matrix with } s_k(X_1) \ge  s_1(X_2)\}$$
 \end{definition}

Recall that for a subset $S\subseteq\mathbb M_n$, we  use $\Span(S)$ for its complex span  
and $\Span_{\IR}(S)$ for its  real span. 
\begin{proposition}\label{prop1} The following holds.
\begin{itemize}
\item[(a)] The set $\cC$ is a pointed convex cone.
\item[(b)] Its affine hull is $\Span_{\IR}\cC=\bH_k\oplus\bM_{n-k}$, where $\bH_k$ denotes the set of all $k\times k$ hermitian matrices.
\item[(c)] The  relative interior, $\cC^\circ$, of $\cC$ consists of matrices of the form $X_1 \oplus X_2$ such that $X_1$ is positive definite and $s_k(X_1) > s_1(X_2)$.
\item[(d)] The relative boundary, $\partial\cC$, of $\cC$ consists of matrices of the form $X_1 \oplus X_2$ such that $X_1$ is positive semi-definite and $s_k(X_1) = s_1(X_2)$.
\item[(e)]  We have $\cC\subseteq P(A)$ whenever $A = A_1 \oplus A_2 \in \bM_k \oplus \bM_{n-k}$ is such that $A_1$ is positive definite and $s_k(A_1) > s_1(A_2)$.
 \end{itemize}

\end{proposition}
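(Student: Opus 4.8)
The plan is to verify the five assertions in the order stated, since the descriptions of $\cC^\circ$ and $\partial\cC$ depend on first knowing the affine hull.

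For (a) I would first note closure under nonnegative scaling: if $X_1\oplus X_2\in\cC$ and $t\ge0$ then $tX_1\succeq0$ and $s_k(tX_1)=t\,s_k(X_1)\ge t\,s_1(X_2)=s_1(tX_2)$, while $0=0\oplus0\in\cC$, so $\cC$ is a cone containing the origin. For convexity the two facts I would invoke are that on $k\times k$ psd matrices $s_k$ coincides with $\lambda_{\min}$, which is concave (an infimum of the linear maps $X_1\mapsto x^*X_1x$ over unit $x$), while $s_1=\|\cdot\|$ is a norm, hence convex. Combining these, a convex combination satisfies $s_k(\lambda X_1+(1-\lambda)Y_1)\ge\lambda s_k(X_1)+(1-\lambda)s_k(Y_1)\ge\lambda s_1(X_2)+(1-\lambda)s_1(Y_2)\ge s_1(\lambda X_2+(1-\lambda)Y_2)$, which is the defining inequality; since convex combinations of psd matrices are psd, $\cC$ is convex. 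Pointedness is immediate ($X_1\oplus X_2$ and its negative in $\cC$ force $X_1=0$ and then $X_2=0$). For (b) one inclusion is trivial, as each $X_1\oplus X_2\in\cC$ has $X_1$ hermitian, so $\Span_{\IR}\cC\subseteq\bH_k\oplus\bM_{n-k}$. For the reverse I would use two families in $\cC$: all $X_1\oplus0$ with $X_1\succeq0$, whose real span is $\bH_k\oplus0$ since psd matrices span the hermitian ones; and, for any $X_2$ and any $t\ge s_1(X_2)$, the pair $tI_k\oplus X_2$ and $tI_k\oplus0$, whose difference $0\oplus X_2$ then lies in $\Span_{\IR}\cC$, giving $0\oplus\bM_{n-k}$. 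Because $0\in\cC$, this linear span is also the affine hull.

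For (c) and (d) I would work relative to $\bH_k\oplus\bM_{n-k}$ and set $D=\{X_1\oplus X_2:X_1\succ0,\ s_k(X_1)>s_1(X_2)\}$. The set $D$ is relatively open (the conditions $X_1\succ0$ and $s_k(X_1)>s_1(X_2)$ are open, by continuity of $\lambda_{\min}$ and $s_1$) and contained in $\cC$, so $D\subseteq\cC^\circ$. For the reverse I would show each point of $\cC\setminus D$ is ejected from $\cC$ by an arbitrarily small perturbation inside the affine hull: if $X_1$ is psd but singular then $s_k(X_1)=0$ forces $X_2=0$, and subtracting $\epsilon vv^*$ for a unit null vector $v$ of $X_1$ destroys positivity; if $X_1\succ0$ but $s_k(X_1)=s_1(X_2)$, then adding $\epsilon uw^*$ for top singular vectors $u,w$ of $X_2$ pushes $s_1$ above $s_k(X_1)$. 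Either way the point is not relatively interior, so $\cC^\circ=D$, which is (c). Since every defining condition of $\cC$ is closed, $\cC$ is closed and the relative boundary is $\cC\setminus\cC^\circ$; negating the description of $D$ and again using that $X_1$ psd singular forces $s_1(X_2)=0=s_k(X_1)$ yields exactly $\{X_1\oplus X_2:X_1\succeq0,\ s_k(X_1)=s_1(X_2)\}$, which is (d).

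For (e), fix $A=A_1\oplus A_2$ with $A_1\succ0$ and $s_k(A_1)>s_1(A_2)$, and take any $B=B_1\oplus B_2\in\cC$. The key is that the singular values of a block-diagonal matrix are the union of those of the blocks, so the hypotheses $s_k(A_1)>s_1(A_2)$ and $s_k(B_1)\ge s_1(B_2)$ make the top $k$ singular values of $A$ (resp.\ $B$) exactly the eigenvalues of $A_1$ (resp.\ $B_1$), giving $\|A\|_{(k)}=\tr A_1$ and $\|B\|_{(k)}=\tr B_1$. I would then test parallelism with $\mu=1$: since $A_1+B_1\succ0$ and, by Weyl's inequality together with the triangle inequality for $s_1$, $s_k(A_1+B_1)\ge s_k(A_1)+s_k(B_1)>s_1(A_2)+s_1(B_2)\ge s_1(A_2+B_2)$, the same reasoning gives $\|A+B\|_{(k)}=\tr(A_1+B_1)=\tr A_1+\tr B_1=\|A\|_{(k)}+\|B\|_{(k)}$. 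Hence $A\|B$, so $\cC\subseteq P(A)$.

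The parts (a), (b), (e) are essentially bookkeeping built on standard facts (concavity of $\lambda_{\min}$, convexity of $s_1$, Weyl's inequality, and the singular values of a direct sum). The one place needing genuine care is the pair (c)--(d): the main obstacle is producing, for each boundary type, the correct infinitesimal perturbation lying inside the affine hull and verifying it really leaves $\cC$—splitting cleanly into the "$X_1$ singular'' and "$s_k(X_1)=s_1(X_2)$'' cases, and relying on the closedness of $\cC$ to pass from $\cC^\circ$ to $\partial\cC$.
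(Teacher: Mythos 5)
Your proof is correct. The paper itself disposes of this proposition in three lines: for (a) it observes that $X=X_1\oplus X_2\in\cC$ is equivalent to ``$X_1\ge 0$ and $\|X\|_{(k)}=\tr X_1$'' and then sandwiches $\|X+Y\|_{(k)}\le \tr(X_1)+\tr(Y_1)=\tr(X_1+Y_1)=\sum_{j=1}^k s_j(X_1+Y_1)\le\|X+Y\|_{(k)}$, which forces equality throughout and hence $X+Y\in\cC$; it declares (b)--(d) straightforward exercises; and it derives (e) directly from Proposition~\ref{lem:P(A)}(b). Your route to convexity is genuinely different in mechanism --- you verify the defining inequality $s_k(\cdot)\ge s_1(\cdot)$ termwise using concavity of $\lambda_{\min}$ on Hermitian matrices together with convexity of the operator norm, rather than exploiting the trace characterization of membership in $\cC$. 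The paper's argument is slicker and reuses the Ky-Fan norm machinery already in play; yours is more self-contained and makes the convexity of each defining constraint visible separately, which is also what powers your careful treatment of (c)--(d) via explicit boundary perturbations (the two cases, $X_1$ singular forcing $X_2=0$, versus $s_k(X_1)=s_1(X_2)$ with a rank-one bump of $X_2$, are exactly the right ones, and your use of Weyl's inequality in (e) is a correct direct substitute for the citation of Proposition~\ref{lem:P(A)}(b)). One cosmetic remark: the paper defines ``pointed'' to mean merely that the cone contains $0$, whereas you prove the stronger standard property $\cC\cap(-\cC)=\{0\}$; both hold, so nothing is lost.
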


\begin{proof}
(a). The set $\cC$ is clearly closed under multiiplication by positive numbers. It is also convex because  $X=X_1\oplus X_2\in\cC$ is equivalent to $X_1 \geq 0$ and $\|X\|_{(k)}=\mathrm{Tr} X_1$. So, if $Y=Y_1\oplus Y_2$ is also  in $\cC$, then  $\|X+Y\|_{(k)}\le\|X\|_{(k)}+\|Y\|_{(k)}=\mathrm{Tr}(X_1)+\mathrm{Tr}(Y_1) =\mathrm{Tr}(X_1+Y_1)=\sum_1^k s_j(X_1+Y_1)\le\|X+Y\|_{(k)}$.

(b)---(d) are straightforward exercises, while (e) follows from Proposition~\ref{lem:P(A)}(b).
\end{proof}

\begin{proposition}\label{prop:phi-preservec-cone-CC}
 Let  $T\colon\bM_n\to\bM_n$ be  a bijective linear map which preserves parallel pairs relative to a Ky-Fan $k$-norm for $1<k< n$. Assume there exist $A,B\in{\cC}^\circ$ such that $T(A)=B$. Then $$T({\cC}^\circ) = {\cC}^\circ\quad\hbox{  and }\quad T(\partial \cC) = \partial \cC.$$ Moreover, $T$ maps  $\Span P(B)=\Span P(A)$ bijectively onto itself.
\end{proposition}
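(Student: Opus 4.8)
The plan is to first dispatch the ``Moreover'' statement by a dimension count, then reduce the two main equalities to the single assertion $T(\cC)=\cC$, whose proof splits into a phase-pinning step (the hard part) and a convex-geometry step.

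First I would record that $A,B\in\cC^\circ$ forces, by Proposition~\ref{prop1}(c), that $A=A_1\oplus A_2$ and $B=B_1\oplus B_2$ with $A_1,B_1$ positive definite and $s_k>s_{k+1}$ for both; in particular the unitaries diagonalizing $A$ (resp.\ $B$) are block diagonal, so Lemma~\ref{Lem:dimP(A)}(a) gives $\Span P(A)=\Span P(B)=\bM_k\oplus\bM_{n-k}$, the minimal possible dimension $k^2+(n-k)^2$. Since $T(P(A))\subseteq P(T(A))=P(B)$ we get $T(\Span P(A))\subseteq\Span P(B)$, and as $T$ is injective and the two spans have equal (minimal) dimension, this inclusion is an equality. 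Thus $T$ restricts to a linear bijection of $\bM_k\oplus\bM_{n-k}$ onto itself, which is exactly the ``Moreover'' claim.

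Next I would reduce everything to showing $T(\cC)=\cC$. Because $A\in\cC^\circ$, Proposition~\ref{prop1}(e) gives $\cC\subseteq P(A)$, whence $T(\cC)\subseteq T(P(A))\subseteq P(B)$. Applying Lemma~\ref{Lem:dimP(A)}(a) to $B$ (in its block-diagonalizing basis) identifies $P(B)$ with the set of $Y=Y_1\oplus Y_2\in\bM_k\oplus\bM_{n-k}$ such that $\mu Y_1$ is psd for some complex unit $\mu$ and $s_k(Y_1)\ge s_1(Y_2)$; thus every element of the convex cone $T(\cC)$ has first block equal to a unit multiple of a psd matrix. The content of the inclusion $T(\cC)\subseteq\cC$ is therefore to show that this unit multiple can always be taken to be $1$, i.e.\ that the first blocks are genuinely psd.

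\textbf{The main obstacle} is precisely this phase-pinning. I would exploit that $T(\cC)$ is closed under addition and contains $B$ with $B_1$ positive definite. For $X\in\cC$ write $T(X)=Y_1\oplus Y_2$; then $T(X)+sB\in T(\cC)\subseteq P(B)$ for all $s\ge0$, so $Y_1+sB_1$ is a unit multiple of a psd matrix for every $s\ge0$. Conjugating by $B_1^{-1/2}$ turns this into the statement that $\tilde Y+sI_k$ is a unit multiple of a psd matrix for all $s\ge 0$, where $\tilde Y=B_1^{-1/2}Y_1B_1^{-1/2}$ is itself a unit multiple of a psd matrix; a direct eigenvalue/argument analysis (the nonzero eigenvalues of $\tilde Y+sI_k$ must share a common argument for every $s$) forces either the phase of $Y_1$ to be $1$ or $Y_1$ to be a complex multiple of $B_1$. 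To eliminate the second, wrong-phase possibility I would use a dimension count: since $k\ge 2$, the full-dimensional cone $T(\cC)$ cannot be contained in the lower-dimensional set of matrices whose first block is a multiple of $B_1$, so there is $W\in T(\cC)$ whose first block is \emph{not} a multiple of $B_1$; the same ray argument applied to $W+sB$ forces $W$ to have psd first block, and then adding $W$ to a putative wrong-phase element and reusing the argument forces that element's phase to be $1$ as well. This yields $T(\cC)\subseteq\cC$.

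Finally I would upgrade the inclusion to equality and read off the interior/boundary statements. From $T(\cC)\subseteq\cC\subseteq\bH_k\oplus\bM_{n-k}$ and injectivity, $T$ maps $\Span_\IR\cC=\bH_k\oplus\bM_{n-k}$ onto itself, so $T$ restricts to a linear homeomorphism of this space carrying the full-dimensional closed cone $\cC$ into itself. By the contrapositive of Corollary~\ref{cor:minimal} together with Proposition~\ref{prop1}(d), $X\in\partial\cC$ implies $s_k(T(X))=s_{k+1}(T(X))$, hence $T(\partial\cC)\subseteq\partial\cC$. If $T(\cC)$ were a proper subcone of $\cC$, a segment from the interior point $B=T(A)$ to a point of $\cC\setminus T(\cC)$ would leave $T(\cC)$ at a boundary point of $T(\cC)$ lying in $\cC^\circ$; but, $T$ being a homeomorphism, $\partial(T(\cC))=T(\partial\cC)\subseteq\partial\cC$ is disjoint from $\cC^\circ$, a contradiction. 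Hence $T(\cC)=\cC$, and since $T$ is a homeomorphism of $\bH_k\oplus\bM_{n-k}$ fixing this cone, it maps its relative interior and relative boundary onto themselves, giving $T(\cC^\circ)=\cC^\circ$ and $T(\partial\cC)=\partial\cC$.
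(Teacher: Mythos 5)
Your proof is correct, and while your first half (the ray $T(X)+sB$, conjugation by $B_1^{-1/2}$, the eigenvalue-argument dichotomy ``psd or a complex multiple of $B_1$,'' and its elimination by a dimension/convexity count) essentially reproduces the paper's Step~1, your treatment of surjectivity is genuinely different and noticeably shorter. The paper's Step~2 works with $T^{-1}$ on a point $Y\in\cC^\circ$: it invokes Corollary~\ref{cor:SASB} to locate the top $k$ singular values of $T^{-1}(Y)$ in the first block (with a separate continuity/connectedness argument for the ambiguous case $n=2k$), then performs a singular value decomposition of that block and compares the real dimensions of $\Span_{\IR}(\cC)$ and $U\Span_{\IR}(\cC)V$ to force that block to be $\pm$\,definite, and finally uses connectedness of $\cC^\circ\cup(-\cC^\circ)$. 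You replace all of this by two soft observations: (i) every $X\in\partial\cC$ satisfies $s_k(X)=s_{k+1}(X)$, so the contrapositive of Corollary~\ref{cor:minimal} together with $T(\cC)\subseteq\cC$ yields $T(\partial\cC)\subseteq\partial\cC$; and (ii) a closed convex subcone of $\cC$ containing the interior point $B=T(A)$ and whose relative boundary misses $\cC^\circ$ must equal $\cC$, since the segment from $B$ to a hypothetical point of $\cC\setminus T(\cC)$ would have to exit $T(\cC)$ at a point of $\partial(T(\cC))=T(\partial\cC)\subseteq\partial\cC$ that lies in the relative interior of $\cC$. This buys you a proof that never touches Corollary~\ref{cor:SASB} and needs no special handling of $n=2k$; the only detail worth writing out more fully is the elimination of the wrong-phase elements (that $s\alpha B_1+W_1$ cannot be a unit multiple of a psd matrix for all $s\ge0$ when $\alpha\notin[0,\infty)$ and $W_1$ is psd but not a multiple of $B_1$), which you only sketch but which is routine.
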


\begin{proof} Since matrices $A,B$ belong to the interior of the cone $\cC$, they both  satisfy the assumption~(a) of Lemma~\ref{Lem:dimP(A)}, so that $\bV:=\Span P(B)=\Span P(A)= \bM_k \oplus \bM_{n-k}$ and by Corollary~\ref{cor:minimal} $T(\bV) = \bV$. It remains to prove the first claim.

{\bf Step 1}: We show that $T$ maps $\cC^{\circ}$ into $\cC$. \smallskip

If $X = X_1 \oplus X_2$ belongs to  the interior of $\cC$,
then we claim that $T(X)  = Y =  Y_1 \oplus Y_2$ is in $\cC$.
To see this, observe that  $A + tX$ is parallel to $A$ for all $t > 0$.
So, $T(A) + tT(X) =B + tY$ is parallel to $B = B_1 \oplus B_2$.
Hence,
 $B_1 + tY_1$ is a multiple of a psd  matrix
 and $s_k(B_1+tY_1) \ge s_1(B_2+tY_2)$  for all $t > 0$. Letting $t\to \infty$ we ge  $s_k(Y_1) \geq s_1(Y_2)$ and $Y_1=e^{i\phi} P$ for some psd matrix $P$. Therefore
  $$B_1+te^{i\phi} P=B_1^{1/2}(I_k+te^{i\phi} B_1^{-1/2}PB_1^{-1/2})B_1^{1/2}\in e^{i\varphi_t} \mathrm{psd}_k;\qquad\varphi_t\in[0,2\pi].$$   Clearly then, by defining $\hat{P}=B_1^{-1/2}PB_1^{-1/2}$, also  $$I_k+te^{i\phi} \hat{P}\in e^{i\varphi_t} \mathrm{psd}_k;\qquad\varphi_t\in[0,2\pi],\;t\ge0.$$ Thus for each $t \geq 0$, the nonzero eigenvalues of $I_k+te^{i\phi} \hat{P}$ all have the same argument.  If $\hat{P} \geq 0$ is not a scalar matrix, then let $0 \leq \lambda_1 < \lambda_2$ be distinct eigenvalues of $\hat{P}$ and choose $t \in (\frac{1}{\lambda_2}, \frac{1}{\lambda_1})$. Then $1 +te^{i\phi} \lambda_1$,  $1 +te^{i\phi} \lambda_2$ have the same argument, so $e^{i\phi} = 1$ and $Y_1=P$. Consequently either $Y_1$ is psd or else $\hat{P}=\lambda I_k\ge0$ and  $Y_1=\alpha B_1$ for some  number $\alpha\in\IC\setminus[0,\infty)$.

 Assume there exist $X,X'\in\cC^\circ$ with $T(X)=\alpha B_1\oplus Y_2$ and $T(X')=Y_1'\oplus Y_2'$ such that $\alpha B_1$ is not psd, while $Y_1'$ is psd but not a scalar multiple of $B_1$. Being a convex cone, the matrix line segment $[X,X']:=\{t X+(1-t)X':0\le t\le 1\}$ lies in $\cC^\circ$ for every $t \in[0,1]$, so the first block of its $T$-image,
  \begin{equation}\label{eq:complex}
    t (\alpha B_1) + (1-t) Y_1'
  \end{equation}
is either a psd or a multiple of $B_1$ for each fixed $t\in[0,1]$. The latter option is possible only for $t=1$. But the former option is contradictory because $\alpha\not\ge0$ so the diagonal entries of \eqref{eq:complex} cannot  always be nonegative for each $t \in (0,1)$.

This shows that either $T$ maps $\cC^\circ$ into $\cC$ or else it maps $\cC^\circ$ into $\IC B_1\oplus \bM_{n-k}$. The latter case is impossible because $\Span\cC^\circ=\bM_k\oplus\bM_{n-k}$ has greater dimension that $\IC B_1 \oplus \bM_{n-k}$.

\medskip
{\bf Step 2}: We show that $T(\cC^{\circ}) = \cC^{\circ}$. Consequently, $T^{-1}$ maps $\cC^{\circ}$ into $\cC$.
\smallskip

\medskip

Choose $Y\in\cC^\circ$. By Corollary~\ref{cor:SASB}, $\Span P(Y) = \Span P(I_k\oplus 0_{n-k})=\bV$ and, by Corollary~\ref{cor:minimal}, $X = T^{-1}(Y)$ satisfies $s_k(X) > s_{k+1}(X)$ and \begin{equation}\label{eq:SpanP(X)}
  \Span P(X) =T^{-1}\Span P(Y)=T^{-1}\Span P(B)= \Span P(A).
\end{equation}

So, $X \in \Span P(A)=\bV$, and hence $X = X_1 \oplus X_2\in \bM_k \oplus \bM_{n-k}$. We claim that the $k$ largest singular values of $X=X_1\oplus X_2$ belong to its first block.

{ \sc Suppose  $ {n \ne 2k}$}.  Then the claim follows  by \eqref{eq:SpanP(X)} and Corollary~\ref{cor:SASB}.

\medskip
{\sc Suppose  ${n = 2k}$}. Since  $\cC^{\circ}$ is a cone, the line segment $t B+(1-t) Y \in \cC^{\circ}$ for each $t\in [0,1]$. Hence, as in \eqref{eq:SpanP(X)}, its preimage, $L(t)=T^{-1}(t B+(1-t) Y)=t A+(1-t) X\in\bM_k\oplus\bM_k$ consists of matrices with $s_k(L(t))>s_{k+1}(L(t))$ for each $t$. Moreover, by Corollary~\ref{cor:SASB}, for each $t$
either the first~$k$ singular values of $L(t)$ belong to the first block or they belong to the second block. By continuity and the connectedness of the line segment $[0,1]$,  they either belong to the first block for each $t\in[0,1]$ or they belong to the second block for each $t\in[0,1]$. At  $t=1$, the line segment equals $L(1)=A$ whose first $k$ singular values are in the first block. Hence, also at $t=0$ they are in the first block, that is, $s_k(X_1)>s_1(X_2)$.

Let $X_1=U_1 D_1V_1$ be its singular value decomposition, let $U=U_1\oplus I_{n-k}$ and  $V=V_1\oplus I_{n-k}$ (so that $U(D_1\oplus X_2)V=X$) and temporarily replace $T$ by a map $\hat{T}\colon Z\mapsto T(UZV)$. Observe that $\hat{T}$ still preserves parallelism, leaves the set $\bV$ invariant, and maps $D_1\oplus X_2$ into $T(X) = Y \in \cC^{\circ}$. By Step 1,
$\hat{T}$ maps the cone $\cC$ into itself. Then, $T$ maps the cones $\cC$ and $U\cC V$ into $\cC$. Hence, $T$ also maps their real-linear spans, $$\Span_{\IR}(\cC\cup U\cC V)=\Span_{\IR}(\cC)+U\Span_{\IR}(\cC)V$$ into $\Span_{\IR}(\cC)=\bH_k\oplus\bM_{n-k}$. Observe that \begin{align*}
  U\Span_{\IR}(\cC)V&=(U_1\oplus I_{n-k})(\bH_k\oplus \bM_{n-k})(V_1\oplus I_{n-k})\\
  &= (U_1V_1V_1^\ast \bH_k V_1)\oplus \bM_{n-k} =W_1\bH_k\oplus\bM_{n-k};\qquad W_1=U_1V_1.
\end{align*}
Unless $W_1=\pm I_k$ the set $W_1\bH_k$ differs from $\bH_k$ in which case the real dimension of $\Span_{\IR}(\cC\cup U\cC V)$ is larger than the real dimension of $\Span_{\IR}(\cC)$. Since this space contains the $T$-image of $\Span_{\IR}(\cC\cup U\cC V)$  we  contradict injectiveness of $T$. Thus, $U_1V_1=W_1=\pm I_k$. Consequently, $X_1=U_1D_1V_1=\pm U_1D_1U_1^\ast$ is either positive definite or negative definite. This shows that $T^{-1}(\cC \cup -\cC) \subseteq \cC \cup -\cC$.  Since $T$ also maps $\cC \cup -\cC$ into itself, $T(\cC \cup -\cC) = \cC \cup -\cC$.  Since $T$ is a homeomorphism, $T$ maps the disconnected interior $\cC^{\circ} \cup -\cC^{\circ}$ onto itself.  Since $T(A) = B\in\cC^\circ$, we must have $T(\cC^{\circ}) = \cC^{\circ}$.
\end{proof}

The next proposition proves that a bijective linear map on $\bM_k \oplus \bM_{n-k}$ satisfying $T({\cC}^\circ) = {\cC}^\circ$  and $T(\partial \cC) = \partial \cC$ maps rank one matrices of $\partial \cC$ into rank one matrices.
\begin{definition}
Given $X \in \partial \cC$, we define $$\Pert(X) = \{Z \in \partial \cC : X \pm tZ \in \cC \text{ for sufficiently small } t>0 \}.$$
\end{definition}

The following lemma concerns when the real dimension of $\Span_{\IR} (\Pert(X))$ equals one.

\begin{lemma}\label{prop:Pert}
Let $X \in \partial \cC$. Then $\dim_{\IR} \Span_{\IR} (\Pert(X))=1$ 
if and only if
	
\medskip
	(1) $X$ is psd and has rank one, or
	
	(2) $X = a (I_k \oplus P)$  for some $a > 0$ and unitary matrix $P$.
	\end{lemma}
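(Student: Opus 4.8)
The plan is to recognize that the number $\dim_{\IR}\Span_{\IR}(\Pert(X))$ is nothing but the affine dimension of the smallest face of the cone $\cC$ that contains $X$ in its relative interior. Granting this, the lemma becomes equivalent to the purely geometric assertion that $X$ generates an \emph{extreme ray} of $\cC$ if and only if $X$ is of type (1) or (2). So the proof splits into two independent pieces: a convex-geometry identity reducing the statement to extreme rays, and an explicit determination of the extreme rays of $\cC$.

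First I would establish the identity $\Span_{\IR}(\Pert(X)) = \Span_{\IR} F$, where $F$ is the minimal face of $\cC$ containing $X$. Set $L = \{Z \in \bH_k\oplus\bM_{n-k} : X \pm tZ \in \cC \text{ for small } t>0\}$. This $L$ is a real-linear subspace, and I claim $L = \Span_{\IR}F$: if $X\pm tZ\in\cC$ then $X$ is the midpoint of a segment lying in $\cC$, so both endpoints lie in $F$ and hence $Z\in\Span_{\IR}F$; conversely, since $X$ lies in the relative interior of $F$, every direction in $\mathrm{aff}(F)=\Span_{\IR}F$ is feasible both ways for small $t$, so $\Span_{\IR}F\subseteq L$. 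Because $X\in\partial\cC$, the face $F$ is contained in $\partial\cC$, whence $F\subseteq L\cap\partial\cC = \Pert(X)\subseteq L$. This sandwiches $\Span_{\IR}\Pert(X)$ between $\Span_{\IR}F$ and $L=\Span_{\IR}F$, giving $\dim_{\IR}\Span_{\IR}\Pert(X)=\dim F$. Since $\cC$ is pointed (Proposition~\ref{prop1}(a)), this dimension equals $1$ exactly when $F$ is an extreme ray.

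It then remains to list the extreme rays of $\cC$. Write $X=X_1\oplus X_2$ with $X_1\ge 0$ and $s_1(X_2)\le s_k(X_1)=\lambda_{\min}(X_1)$. If $\lambda_{\min}(X_1)=0$, then $X_2=0$, and $X_1\oplus 0$ is extreme in $\cC$ iff $X_1$ is an extreme ray of $\mathrm{psd}_k$, i.e.\ $X_1$ is rank one; here I use $k\ge 2$, which forces $\lambda_{\min}$ of a rank-one $X_1$ to vanish, landing in case (1). If $\lambda_{\min}(X_1)>0$, then $X_1$ is positive definite and, as $X\in\partial\cC$, we have $s_1(X_2)=\lambda_{\min}(X_1)=:a$. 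I would rule out every such $X$ except $a(I_k\oplus P)$ by producing a nontrivial splitting $X=Y+Y'$ inside $\cC$: if $X_1\ne aI_k$, peel off a small multiple of the top eigenprojection of $X_1$ into a second summand whose second block is $0$; if $X_1=aI_k$ but $X_2/a$ is not unitary, use that the unitaries are precisely the extreme points of the operator-norm unit ball of $\bM_{n-k}(\IC)$ to write $X_2/a=\tfrac12(A+B)$ with $s_1(A),s_1(B)\le 1$, $A\ne B$, and take $Y=\tfrac a2 I_k\oplus\tfrac a2 A$, $Y'=\tfrac a2 I_k\oplus\tfrac a2 B$. In each case $Y,Y'\in\cC$ and at least one is not proportional to $X$, so $X$ is not extreme. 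This leaves exactly the two claimed families.

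The main obstacle is the converse direction of the extreme-ray computation, namely verifying that the two families really are extreme; the delicate case is $a(I_k\oplus P)$. Here one reduces by the unitary $I_k\oplus P^*$ to $a(I_k\oplus I_{n-k})$ and must show that a decomposition $X=Y+Y'$ in $\cC$ forces $Y,Y'\in\IR_{\ge 0}X$. Writing the first blocks as $Y_1,Y_1'\ge 0$ with $Y_1+Y_1'=aI_k$ (so they are simultaneously diagonalizable with eigenvalues summing to $a$) and the second blocks satisfying $s_1(Y_2)\le\lambda_{\min}(Y_1)$, $s_1(Y_2')\le\lambda_{\min}(Y_1')$, the triangle inequality $a=s_1(Y_2+Y_2')\le s_1(Y_2)+s_1(Y_2')\le a$ must be an equality throughout; testing the resulting equality $\langle u, Y_2 u\rangle+\langle u, Y_2' u\rangle = a$ against all unit vectors $u$ then forces $Y_1=\beta I_k$ and $Y_2=\beta P$, i.e.\ $Y$ is a scalar multiple of $X$. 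A secondary point worth spelling out is the lineality identity of the second paragraph, since $\Pert(X)$ is itself not a linear space: it lives inside $\partial\cC$, where the first block is constrained to be positive semidefinite, so its real span can be strictly larger than $\Pert(X)$.
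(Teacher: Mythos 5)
Your argument is correct, but it takes a genuinely different route from the paper's. You first establish the identity $\Span_{\IR}(\Pert(X))=\Span_{\IR}F$, where $F$ is the minimal face of $\cC$ containing $X$, thereby reducing the lemma to the purely convex-geometric claim that the extreme rays of $\cC$ are exactly those generated by rank-one psd matrices and by $a(I_k\oplus P)$; you then compute the extreme rays by exhibiting nontrivial splittings $X=Y+Y'$ in the non-extreme cases and proving rigidity in the two extreme families. The paper never mentions faces: after normalizing $X=\diag(s_1,\dots,s_n)$ it proves necessity by writing down explicit elements of $\Pert(X)$ (namely $X+dE_{nn}$ or $X+dE_{11}$) that are not real multiples of $X$, and proves sufficiency by directly analyzing an arbitrary $Z=Z_1\oplus Z_2\in\Pert(X)$ for $X=E_{11}$ and $X=I_n$. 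The computational cores coincide --- your numerical-range argument forcing $Y_2=\beta I_{n-k}$ is exactly the device the paper uses to force $Z_2=I_{n-k}$, and the two viewpoints are dual, since a two-sided perturbation $Z$ is the same data as the splitting $X=\tfrac12(X+tZ)+\tfrac12(X-tZ)$. What your framing buys is conceptual clarity and a bridge to the extreme-ray arguments the paper itself uses later (in Proposition~\ref{prop:permutes}, via \cite{HW} and \cite{Grone-Marcus}); what the paper's direct computation buys is self-containedness, avoiding the facial-structure preliminaries. Two small points to tighten: when you invoke Proposition~\ref{prop1}(a) for pointedness, note that the paper there only asserts $0\in\cC$, whereas you need $\cC\cap(-\cC)=\{0\}$ to conclude that a one-dimensional face is a ray rather than a line (this is immediate from the definition of $\cC$, since $\pm X_1\ge 0$ forces $X_1=0$ and then $X_2=0$, but it deserves a sentence); and the case $X=0$, where $\Pert(0)=\{0\}$ has span of dimension $0$, should be dispatched explicitly before speaking of the minimal face as an extreme ray.
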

\begin{proof}
Note $\Pert(0) = \{0\}$, so we may assume $X \ne 0$.  Next note that, for any unitary matrices $U_1 \in \bM_k$ and $V_2,W_2 \in \bM_{n-k}$, and any positive scalar $c$, the sets $\cC$ and $\partial \cC$ are invariant under the bijective map $\phi(X) = c(U_1 \oplus V_2) X (U_1^* \oplus W_2)$.  Consequently $Z \in \Pert(X)$ if and only if $\phi(Z) \in \Pert (\phi(X))$, and $\Pert(\phi(X)) = \phi(\Pert(X))$.  Thus we may assume that $X = \diag (s_1, \dots, s_n)$ with $1 = s_1 \geq \dots \geq s_n \geq 0$.

We prove necessity by showing the contrapositive.  Suppose $X$ has rank at least $2$, and $s_1 > s_n$.  Observe that $X \in \Pert(X)$, so it suffices to show $\Pert(X)$ contains some $Z \in \partial \cC$ that is not a real scalar multiple of $X$.  If $s_k > s_n$ then $X + dE_{nn} \in \Pert(X)$ for $d = (s_k - s_n)/2\in\IR$. If $s_k = s_n$ then $s_1 > s_k$ and $X + dE_{11} \in \Pert(X)$ for $d = (s_1 - s_k)/2\in\IR$.  Necessity follows.

To prove sufficiency, first suppose $X$ has rank one, so $X = E_{11}$.  Let $Z = Z_1 \oplus Z_2 \in \Pert(X)$.  Then $E_{11} \pm tZ_1$ is psd for some $t>0$, so $Z_1$ cannot have a nonzero diagonal entry except for the first one.  Since $Z \in \partial \cC$, $Z_1$ is a nonnegative scalar multiple of $E_{11}$ and $Z_2 = 0$.  Thus $\Pert(X)$ consists of nonnegative scalar multiples of $X$.

Next we may suppose $X = I_n$.  Let $0 \ne Z = Z_1 \oplus Z_2 \in \Pert(X)$. Since $Z \in \partial \cC$,  $s_k(Z_1) = s_1(Z_2)$. Also for sufficiently small $t > 0$, $I_n \pm tZ \in \cC$, that is, $I_k \pm tZ_1 \ge 0$ and $s_k(I_k \pm tZ_1) \ge s_1(I_{n-k} \pm tZ_2)$. Thus for sufficiently small $t > 0$, singular values of a psd matrix $I_k \pm tZ_1$ are its eigenvalues and
$$1 - t s_1(Z_1) = s_k(I_k - tZ_1) \ge s_1(I_{n-k} - tZ_2) = \|I_{n-k} - t Z_2\| \ge 1 - t\|Z_2\| = 1 - t s_1(Z_2) = 1 - t s_k(Z_1).$$
It follows that $s_1(Z_1) = s_k(Z_1)$ and $Z_1 = bI_k$ for some $b > 0$; by scaling $Z$ we may assume $b=1$.
Then for sufficiently small $t > 0$ and all unit vectors $v \in \IC^{n-k}$, $$1-t = s_k (I_k - t Z_1) \geq \|I_{n-k} - tZ_2 \| \geq |v^* (I_{n-k} - tZ_2) v| \geq \text{Re } v^* (I_{n-k} - tZ_2)v = 1 - t \text{Re } v^* Z_2 v,$$ so $1 \leq \text{Re } v^* Z_2 v$.  Thus the numerical range $W(Z_2)$ is contained in the closed half-plane $\text{Re } z \geq 1$ and the unit disk (because $\|Z_2 \| = 1$), so $W(Z_2) = \{1\}$, whence $Z_2 = I_{n-k}$ and $Z = I_n$.  Thus $\Pert(X)$ consists of nonnegative scalar multiples of $X$.  Sufficiency follows.
\end{proof}

With Lemma~\ref{prop:Pert} in mind we define two path-connected sets
\begin{align*}
  {\mathcal S}_1 &= \{X \in \partial \cC : \text{rank} \, X = 1\}= \{ aR \oplus 0_{n-k} : a > 0, R \in \bH_k \text{ is a rank-one projection}\}, \\
  {\mathcal S}_U&=\{a (I_k \oplus P) : a > 0, P \text{ is unitary} \}.
\end{align*}
Notice that, by continuity of singular values, their union is not path-connected.
\begin{proposition}\label{prop:permutes-CC}
 Suppose $1<k<n$ and $T$ is a bijective linear map on $\bM_k \oplus \bM_{n-k}$ satisfying $T(\cC) = \cC$ and $T(\partial \cC) = \partial \cC$.   Then $T$ maps ${\mathcal S}_1$ onto itself.
\end{proposition}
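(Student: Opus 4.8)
The plan is to show that $T$ preserves the invariant $\dim_{\IR}\Span_{\IR}(\Pert(\cdot))=1$, deduce via Lemma~\ref{prop:Pert} that $T$ permutes the set ${\mathcal S}_1\cup{\mathcal S}_U$, and then argue that $T$ cannot interchange the two pieces.

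First I would record that the hypotheses $T(\cC)=\cC$ and $T(\partial\cC)=\partial\cC$ pass to $T^{-1}$, since $T$ is a linear bijection. Then, for any $X\in\partial\cC$, if $Z\in\Pert(X)$, i.e. $Z\in\partial\cC$ and $X\pm tZ\in\cC$ for all sufficiently small $t>0$, applying $T$ gives $T(Z)\in\partial\cC$ and $T(X)\pm tT(Z)=T(X\pm tZ)\in\cC$, so $T(Z)\in\Pert(T(X))$; the same argument applied to $T^{-1}$ and $T(X)$ yields the reverse inclusion. Hence $T(\Pert(X))=\Pert(T(X))$. Since $T$ is a real-linear isomorphism, $\Span_{\IR}(\Pert(T(X)))=T(\Span_{\IR}(\Pert(X)))$, and therefore $\dim_{\IR}\Span_{\IR}(\Pert(T(X)))=\dim_{\IR}\Span_{\IR}(\Pert(X))$. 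By Lemma~\ref{prop:Pert}, the locus of $X\in\partial\cC$ where this dimension equals $1$ is exactly ${\mathcal S}_1\cup{\mathcal S}_U$, so $T({\mathcal S}_1\cup{\mathcal S}_U)={\mathcal S}_1\cup{\mathcal S}_U$.

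Next I would use connectivity. Being a linear bijection of a finite-dimensional space, $T$ is a homeomorphism, so it permutes the path-components of ${\mathcal S}_1\cup{\mathcal S}_U$. The sets ${\mathcal S}_1$ and ${\mathcal S}_U$ are each path-connected and are separated inside the union: the continuous function $X\mapsto s_n(X)/s_1(X)$ (well defined since $s_1>0$ on both) equals $0$ on ${\mathcal S}_1$ and $1$ on ${\mathcal S}_U$, so no path in the union joins them. Thus ${\mathcal S}_1,{\mathcal S}_U$ are precisely the two path-components, and $T$ either fixes each of them setwise or swaps them; since $T$ is bijective on the union, in the swapping case $T$ restricts to a homeomorphism ${\mathcal S}_1\to{\mathcal S}_U$.

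Finally I would rule out the swap, and this is where I expect the main obstacle. The naive numerical invariants are not quite enough: ${\mathcal S}_1\cong(0,\infty)\times\mathbb{CP}^{k-1}$ has manifold dimension $2k-1$ and real span $\bH_k\oplus 0_{n-k}$ of dimension $k^2$, while ${\mathcal S}_U\cong(0,\infty)\times U(n-k)$ has manifold dimension $(n-k)^2+1$ and real span $\IR I_k\oplus\bM_{n-k}$ of dimension $2(n-k)^2+1$; both pairs of numbers coincide for sporadic parameters (for instance $n=5$, $k=3$, where a Pell-type relation makes the two dimension counts agree). The robust distinction is topological: ${\mathcal S}_1$ is simply connected, since $\mathbb{CP}^{k-1}$ is, whereas $\pi_1({\mathcal S}_U)\cong\pi_1(U(n-k))\cong\ZZ$, detected by the determinant, as $n-k\ge 1$. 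A homeomorphism ${\mathcal S}_1\to{\mathcal S}_U$ is therefore impossible, so the swap cannot occur and $T({\mathcal S}_1)={\mathcal S}_1$. The crux is exactly this last step, namely separating ${\mathcal S}_1$ from ${\mathcal S}_U$ uniformly in $(n,k)$: dimension counting fails on an infinite family of parameters, and one must pass to the fundamental group to finish.
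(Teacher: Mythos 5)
Your proof is correct, and its first two stages coincide with the paper's: you establish $T(\Pert(X))=\Pert(T(X))$, invoke Lemma~\ref{prop:Pert} to get $T({\mathcal S}_1\cup{\mathcal S}_U)={\mathcal S}_1\cup{\mathcal S}_U$, and use path-connectedness to reduce to ruling out a swap of the two components (you even supply the separating function $s_n/s_1$ that the paper leaves as a remark about continuity of singular values). Where you diverge is the final step. The paper rules out the swap by a purely linear-algebraic double dimension count: a swap would force $\dim_{\IR}\Span_{\IR}{\mathcal S}_1=\dim_{\IR}\Span_{\IR}{\mathcal S}_U$, i.e.\ $k^2=1+2(n-k)^2$, \emph{and}, since $T$ is $\IC$-linear, also $\dim_{\IR}\Span_{\IC}{\mathcal S}_1=\dim_{\IR}\Span_{\IC}{\mathcal S}_U$, i.e.\ $2k^2=2+2(n-k)^2$; subtracting gives $(n-k)^2=0$, contradicting $k<n$. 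So your assertion that ``dimension counting fails'' is accurate only for the particular pair of invariants you tested (real span dimension and manifold dimension, which indeed both coincide at $(n,k)=(5,3)$ on the Pell family $k^2-2(n-k)^2=1$); the paper's second invariant is the \emph{complex} span, which exploits the $\IC$-linearity of $T$ and kills all parameters at once. Your alternative via $\pi_1$ --- ${\mathcal S}_1\simeq(0,\infty)\times\mathbb{CP}^{k-1}$ simply connected versus $\pi_1({\mathcal S}_U)\cong\pi_1(U(n-k))\cong\ZZ$ --- is also valid and has the virtue of using only that $T$ is a homeomorphism, so it would survive in settings where complex linearity is unavailable; the trade-off is that it imports algebraic topology where the paper needs only a dimension comparison. (Note that the topological argument would need modification in the real case of Section~\ref{section3}, where the analogous components ${\mathcal S}_\pm$ involve $\mathrm{O}(n-k)$ and $\mathbb{RP}^{k-1}$ and the fundamental groups no longer separate so cleanly; the paper's span-dimension method adapts there more directly.)
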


\begin{proof}
Being a linear bijection, $T$ maps $\Span_{\IR}(\Pert(X))$ onto $\Span_{\IR}(\Pert(T(X)))$ for each $X\in\partial \cC$. Hence, by Lemma~\ref{prop:Pert}, $T$ maps the subset ${\mathcal S}_1\cup {\mathcal S}_U\subseteq\partial\cC$ bijectively onto itself, so $T$ either maps the path connected components ${\mathcal S}_1$, $\mathcal{S}_U$ back onto themselves or swaps them.
The latter case would imply that the real-linear span of ${\mathcal S}_1$, that is,
$$\Span_{\IR}{\mathcal S}_1=\bH_k\oplus 0_{n-k}$$ would be mapped onto
$$\Span_{\IR}{\mathcal S}_U=\IR I_k\oplus \bM_{n-k}$$
(the equality holds  because each norm-one hermitian matrix $H$ is the midpoint of two unitaries $H\pm i \sqrt{I-H^2}$, so  each complex matrix is a real linear span of unitaries)
Thus $\dim_{\IR}\Span_{\IR}{\mathcal S}_1 = \dim_{\IR}\Span_{\IR}{\mathcal S}_U$ which gives
$k^2=1+2(n-k)^2$. Also, comparing the real dimensions of the $\IC$-linear spans of $\mathcal{S}_1$, $\mathcal{S}_U$, which $T$ maps in a like fashion,    we would get in addition $2k^2=2+2(n-k)^2$. Clearly  the two systems of equations are contradictory.
Thus $T$ maps $\mathcal{S}_1$ onto itself.
\end{proof}

With the above preparation, we can present the following. 

\begin{proof}[Proof of Theorem \ref{main2} for $1<k<n$] We first prove that if $T\colon \mathbb M_n\rightarrow \bM_n$ is a bijective linear map preserving parallel pairs with respect to $1<k<n$, then $T$ preserves rank one matrices also.

Let $R=cUE_{11} V \in\bM_n$ be a singular value decomposition of a rank-one matrix~$R$.
%
%
 Let $B=U(I_k\oplus 0_{n-k})V$ and $A=T^{-1}(B)$.  Then, by Corollary~\ref{cor:minimal}, $s_k(A)>s_{k+1}(A)$ so there exist unitary matrices $\hat{U},\hat{V}$ such that $\hat{U}A\hat{V}=\diag(s_1(A),\dots,s_n(A))=D_1\oplus D_2\in\cC^\circ$.

Introduce the parallelism preserver $\hat{T}(Z)=U^\ast T(\hat{U}^\ast Z\hat{V}^\ast)V^\ast$, which maps $D_1\oplus D_2\in\cC^\circ$ into  $U^\ast T(A)V^\ast=I_k\oplus 0_{n-k}\in\cC^\circ$. Then, $\hat{T}$ satisfies the hypotheses of Propositions~\ref{prop:phi-preservec-cone-CC} and then also of Proposition~\ref{prop:permutes-CC}. It follows that $\hat{T}$, hence also $\hat{T}^{-1}$, maps the set ${\mathcal S}_1\ni cE_{11}$ onto itself. Therefore, $\rk T^{-1}(R)=1$, and since $R$ was an arbitrary rank-one matrix, we see that $T^{-1}$  preserves the set of rank-one matrices.

Since $T^{-1}$ preserves rank-one matrices, by the  classical result on rank-one preservers (see, e.g.~\cite{Marcus-Molys} and~\cite{Westwick}) there exist invertible matrices $M,N \in \bM_n$ so that $T^{-1}(Y)=M^{-1}YN^{-1}$ or  $T^{-1}(Y)=M^{-1}Y^tN^{-1}$.  Finally, the result follows using Lemma~\ref{lem:rank-one=preservers}.
\end{proof}

\section{Results on real matrices}\label{section3}

In this section we characterize bijective linear maps $T\colon \bM_{n}(\IR)\rightarrow \bM_n(\IR)$ preserving parallel pairs with respect to the Ky-Fan $k$-norm for $1 < k \le n$.  As in the complex case we again obtain that all of them are scalar multiples of isometries.
Linear isometries  $T\colon  \bM_n(\IF) \rightarrow \bM_n(\IF)$ of the Ky-Fan $k$-norm $\|\cdot\|_{(k)}$ ($1 \le k \le n$) on $\bM_n(\IF)$ were
classified by the work of several authors; see \cite{Johnson-Laffey-Li,Grone-Marcus,Morita,Russo}. It turns out that, except for the isometries of the
Ky-Fan $2$-norm on $4$-by-$4$ real matrices,
 they  have the standard form. More precisely, the following holds:
\begin{itemize}
\item[(1)] $T$ has the form $X\mapsto UXV$ or $X\mapsto UX^tV$
for some  $U,V\in\bM_n(\IF)$ satisfying
$U^*U = V^*V = I_n$, i.e., $U, V$ are unitary if $\IF = \IC$, and
$U, V$ are orthogonal if $\IF = \IR$, or
\item[(2)] $(\bM_n(\IF), k) = (\bM_4(\IR), 2)$ and $T$ has the form
$$X\mapsto UX^+V\quad \hbox{ or }\quad X\mapsto \IL(UX^+V) \quad
\hbox{ or }\quad X\mapsto -E\IL(EUX^+V)$$
where $X^+$ denotes either the identity or the transpose map, and
$E=\diag(1,-1,-1,-1)$, $U,V\in \bM_4(\IR)$ are orthogonal, and
 $\IL\colon\bM_4(\IR)\to\bM_4(\IR)$ is defined by
$$\IL(X)=\tfrac{1}{2}\bigl(X+B_1X C_1+B_2XC_2+B_3XC_3\bigr),$$
where
$$B_1=\left(
\begin{array}{cc}
 1 & 0 \\
 0 & 1 \\
\end{array}
\right)\otimes \left(
\begin{array}{cc}
 0 & -1 \\
 1 & 0 \\
\end{array}
\right),\quad
C_1=\left(
\begin{array}{cc}
 1 & 0 \\
 0 & -1 \\
\end{array}
\right)\otimes \left(
\begin{array}{cc}
 0 & 1 \\
 -1 & 0 \\
\end{array}
\right),
$$
$$
B_2=\left(
\begin{array}{cc}
 0 & 1 \\
 -1 & 0 \\
\end{array}
\right)\otimes \left(
\begin{array}{cc}
 -1 & 0 \\
 0 & 1 \\
\end{array}
\right),\quad
 C_2=\left(
\begin{array}{cc}
 0 & 1 \\
 -1 & 0 \\
\end{array}
\right)\otimes \left(
\begin{array}{cc}
 1 & 0 \\
 0 & 1 \\
\end{array}
\right),
$$
$$
B_3=\left(
\begin{array}{cc}
 0 & -1 \\
 1 & 0 \\
\end{array}
\right)\otimes \left(
\begin{array}{cc}
 0 & 1 \\
 1 & 0 \\
\end{array}
\right), \quad  C_3=\left(
\begin{array}{cc}
 0 & 1 \\
 1 & 0 \\
\end{array}
\right)\otimes \left(
\begin{array}{cc}
 0 & 1 \\
 -1 & 0 \\
\end{array}
\right).$$
\end{itemize}
Let $\mathrm{SO}(n)\subseteq\bM_n(\IR)$ be the group of orthogonal matrices with determinant $1$ and let $\bS_n\subseteq\bM_n(\IR)$ be the set of $n$-by-$n$ symmetric matrices.
As shown in~\cite{Johnson-Laffey-Li}, the map $\IL$ satisfies the following.
\begin{itemize}
 \item $\IL$ is an involution.
 \item $\IL$ maps rank-one matrices of norm-two in $\bS_2\oplus 0_2$ onto $I_2\oplus \mathrm{SO}(2)$.
 \item $\IL$ fixes the set of orthogonal matrices with negative determinant.
\end{itemize}

The above facts will be used in the proof of our main theorem of this section.

\begin{theorem}\label{main}  Let $\|\cdot\|_{(k)}$ be the
Ky-Fan $k$-norm on $\bM_n(\IR)$ for $1 < k \le n$. Suppose $T\colon \bM_n(\IR) \rightarrow \bM_n(\IR)$ is a bijective linear map which preserves
parallel pairs with respect to $\|\cdot\|_{(k)}$. Then $T$ is a scalar multiple of a linear isometry of $(\bM_n(\IR),\|\cdot\|_{(k)})$. More precisely, there exists $\gamma > 0$ and orthogonal matrices $U, V\in \bM_n(\IR)$ such that $T$ has the form
$$X \mapsto \gamma UXV \qquad \hbox{ or } \qquad X \mapsto \gamma UX^tV,$$
except when $(k, n) = (2,4)$, in which case $T$ either  takes one of the above two forms~or
$$X\mapsto\gamma P\IL(UXV)\qquad \hbox{ or } \qquad X \mapsto \gamma P\IL(UX^tV)$$
 for some scalar  $\gamma > 0$ and orthogonal $U, V,P \in \bM_4(\IR)$.
\end{theorem}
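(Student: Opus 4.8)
The plan is to transplant the entire complex-case development of Section~\ref{section2} to $\bM_n(\IR)$, systematically replacing ``unitary'' by ``orthogonal'', ``psd'' and ``hermitian'' by ``real symmetric positive semidefinite'' and ``real symmetric'', and the complex units by $\pm1$. With these substitutions the real analogs of Proposition~\ref{lem:P(A)}, Lemma~\ref{lem:rank-one=preservers}, Lemma~\ref{Lem:dimP(A)}, Corollaries~\ref{cor:SASB} and~\ref{cor:minimal}, Propositions~\ref{prop1} and~\ref{prop:phi-preservec-cone-CC}, and Lemma~\ref{prop:Pert} all go through essentially verbatim; in the real form of Lemma~\ref{prop:Pert} the numerical-range step still forces $Z_2=I_{n-k}$, so its case~(2) reads $X=a(I_k\oplus P)$ with $P$ orthogonal. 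For the trace norm $k=n$ I would simply repeat the complex proof: the real analog of Lemma~\ref{lem:trace-norm-cone-classification-of-singular} (with $\mu=\pm1$) shows $T$ preserves singular matrices, Dieudonn\'e's theorem over $\IR$ gives $X\mapsto MXN$ or $X\mapsto MX^tN$, and the real Lemma~\ref{lem:rank-one=preservers} forces $M,N$ to be scalar multiples of orthogonal matrices. This case admits no exception.

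All the novelty lies in $1<k<n$, and specifically in the real analog of Proposition~\ref{prop:permutes-CC}. After the usual reduction to a parallelism preserver $\hat T$ on $\bV=\bM_k\oplus\bM_{n-k}$ with $\hat T(\cC^\circ)=\cC^\circ$ and $\hat T(\partial\cC)=\partial\cC$, Lemma~\ref{prop:Pert} shows $\hat T$ permutes the extreme rays whose $\Pert$-span is one-dimensional. Over $\IR$ these form \emph{three} path components rather than two: alongside ${\mathcal S}_1=\{aR\oplus 0_{n-k}:R\in\bS_k\text{ a rank-one projection}\}$, the set $\{a(I_k\oplus P):P\text{ orthogonal}\}$ splits along $\det P=\pm1$ into ${\mathcal S}_U^+$ (with $P\in\mathrm{SO}(n-k)$) and ${\mathcal S}_U^-$ (with $\det P=-1$). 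I would then compute that $\Span_{\IR}{\mathcal S}_1=\bS_k\oplus 0_{n-k}$ has dimension $k(k+1)/2$, whereas $\Span_{\IR}{\mathcal S}_U^\pm$ equals $\IR I_k$ summed with the real span of the corresponding component of $\mathrm{O}(n-k)$; here one uses $\Span_{\IR}\mathrm{SO}(m)=\Span_{\IR}\mathrm{O}^-(m)=\bM_m(\IR)$ for $m\ge3$, while for $m=2$ the two spans are distinct $2$-dimensional spaces and for $m=1$ they are lines.

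The decisive dichotomy I would extract is that $\hat T$ must fix ${\mathcal S}_1$ unless $(k,n)=(2,4)$. When $n-k\ge3$ one has $\Span_{\IR}{\mathcal S}_U^+=\Span_{\IR}{\mathcal S}_U^-=\IR I_k\oplus\bM_{n-k}(\IR)=:W$, a single subspace distinct from $\bS_k\oplus0_{n-k}$; any permutation of the three components that moved ${\mathcal S}_1$ off itself would force $\hat T(W)$ to equal both $W$ and $\Span_{\IR}{\mathcal S}_1$, which is impossible, so $\hat T({\mathcal S}_1)={\mathcal S}_1$ regardless of dimension coincidences. When $n-k\le2$ the common value $\dim\Span_{\IR}{\mathcal S}_U^\pm$ is $1$ (for $n-k=1$) or $3$ (for $n-k=2$), and a swap would require $k(k+1)/2$ to equal this, which forces $(k,n-k)=(2,2)$. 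Hence for every $(k,n)\ne(2,4)$ we obtain $\hat T({\mathcal S}_1)={\mathcal S}_1$, and the remainder of the complex argument — rank-one preservation, the classical rank-one preserver theorems over $\IR$, and real Lemma~\ref{lem:rank-one=preservers} — yields the standard form $\gamma UXV$ or $\gamma UX^tV$.

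For the genuinely exceptional $(k,n)=(2,4)$ I would keep the subcase $\hat T({\mathcal S}_1)={\mathcal S}_1$ exactly as above, and in the two swap subcases $\hat T({\mathcal S}_1)={\mathcal S}_U^+$ and $\hat T({\mathcal S}_1)={\mathcal S}_U^-$ precompose $\hat T$ with the isometry $\IL$ and with the variant $X\mapsto -E\IL(EX)$ respectively, invoking the quoted facts that $\IL$ carries the norm-two rank-one matrices of $\bS_2\oplus0_2$ onto $I_2\oplus\mathrm{SO}(2)$ and fixes the negative-determinant orthogonal matrices. The composite is again a parallelism preserver but now fixes ${\mathcal S}_1$, so by the non-exceptional argument it has standard form; unwinding the reductions and the composition then recovers $T\colon X\mapsto\gamma P\IL(UXV)$ or $\gamma P\IL(UX^tV)$. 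I expect the main obstacle to be exactly this $(2,4)$ bookkeeping — matching each swap pattern to the correct exceptional isometry and verifying that the composite truly lands in the rank-one-preserving case — together with establishing the $\mathrm{SO}(m)$ span identities that power the $n-k\ge3$ dichotomy.
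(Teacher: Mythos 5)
Your overall architecture matches the paper's: real analogs of the $\Span P(A)$ dimension count, the cone $\cC$, the $\Pert$ analysis, the three path components $\cS_1,\cS_+,\cS_-$ (your $\cS_U^\pm$), and a span/dimension comparison isolating $(k,n)=(2,4)$. Your argument for $n-k\ge 3$ --- that $\Span_{\IR}\cS_+=\Span_{\IR}\cS_-$ is a single subspace distinct from $\Span_{\IR}\cS_1$, so any permutation moving $\cS_1$ would force $\hat T$ to send that one subspace to two different subspaces --- is actually cleaner than the paper's route, which compares extreme rays of the cones $(\Span\cS_1)\cap\cC$ and $(\Span\cS_+)\cap\cC$ via Hill--Waters and Grone--Marcus. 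One caveat on the "essentially verbatim" claim: the real analog of Proposition~\ref{prop:phi-preservec-cone-CC} (Proposition~\ref{prop1:phi-preservec-cone-CC}) is given a separate proof in the paper for a reason. Over $\IR$ the set $P(A)\setminus\{0\}$ for $A\in\cC^\circ$ is the disconnected union of the blunted cones $\pm(\cC\setminus\{0\})$, whereas over $\IC$ it is the connected set $\IT(\cC\setminus\{0\})$; one first obtains only $T(\cC)\subseteq\cC\cup(-\cC)$ and must use connectedness (replacing $T$ by $-T$ if necessary), and the $n=2k$ sub-argument of the complex Step~2 must be replaced, since over $\IR$ the alternative (ii) of Corollary~\ref{cor:SASB} cannot occur ($\bS_k\oplus\bM_k\ne\bM_k\oplus\bS_k$). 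This is repairable, but it is not a literal transplant.

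The genuine gap is in your $(2,4)$ endgame. The trichotomy (fix $\cS_1$, or send it to $\cS_+$, or to $\cS_-$) is established for a single map $\hat T$, which is built from one chosen rank-one matrix $R_1$ through the orthogonal matrices $U,V,\hat U,\hat V$; it therefore controls only those rank-one matrices that lie in $\bS_2\oplus 0_2$ in those particular coordinates. A priori, different rank-one matrices could fall into different branches of the trichotomy, in which case no single isometry $\Phi$ makes $(T\circ\Phi)^{-1}$ rank-one preserving, and your "unwinding the reductions" step fails. The paper closes this hole with a chaining construction you have not supplied: for arbitrary rank-ones $R_1=xy^t$ and $R_2=uv^t$ it produces an intermediate rank-one $R$ such that $(R_1,R)$ fit simultaneously into one frame ($UR_1V^t=E_{11}$ and $URV^t\in\bS_2\oplus 0_2$) and $(R,R_2)$ into another; hence $T^{-1}(R_1)$ and $T^{-1}(R)$, and then $T^{-1}(R)$ and $T^{-1}(R_2)$, are simultaneously rank-one or simultaneously invertible, making the dichotomy global before any composition with $\IL$ or $\bL$ is performed. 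The non-exceptional cases are unaffected, since there each individual $R_1$ already yields $T^{-1}(R_1)$ of rank one; but for $(2,4)$ you must add this globalization step --- the difficulty is not merely matching swap patterns to the correct exceptional isometry.
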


\medskip

\begin{corollary}
 Assume a linear bijection $T\colon\bM_n(\IR)\to\bM_n(\IR)$ is such that $\|A+B\|_{(k)}=\|A\|_{(k)}+\|B\|_{(k)}$ implies $\|T(A)+T(B)\|_{(k)}=\|T(A)\|_{(k)}+\|T(B)\|_{(k)}$,
 where $\|\cdot\|_{(k)}$ is a Ky-Fan $k$-norm. Then, $T$ is a scalar multiple of a linear isometry of $(\bM_n(\IR),\|\cdot\|_{(k)})$ and hence takes one of the forms from Theorem~\ref{main}.
\end{corollary}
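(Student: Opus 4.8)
The plan is to deduce this statement directly from Theorem~\ref{main} by showing that any triangle-equality preserver is automatically a parallelism preserver, exactly along the lines already used for the complex case in Corollary~\ref{maincor}. The only feature distinguishing the real setting is that a real unit $\mu$ (a scalar with $|\mu|=1$) must be $\pm 1$, and I would point out that this causes no difficulty.

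Concretely, suppose $A$ is parallel to $B$ relative to $\|\cdot\|_{(k)}$. By definition there is a real unit $\mu\in\{-1,+1\}$ with $\|A+\mu B\|_{(k)}=\|A\|_{(k)}+\|B\|_{(k)}$. Since $|\mu|=1$ we have $\|B\|_{(k)}=\|\mu B\|_{(k)}$, so the pair $(A,\mu B)$ satisfies equality in the triangle inequality. The hypothesis on $T$ then yields $\|T(A)+T(\mu B)\|_{(k)}=\|T(A)\|_{(k)}+\|T(\mu B)\|_{(k)}$. Using linearity, $T(\mu B)=\mu T(B)$, and again $\|\mu T(B)\|_{(k)}=\|T(B)\|_{(k)}$; hence $\|T(A)+\mu T(B)\|_{(k)}=\|T(A)\|_{(k)}+\|T(B)\|_{(k)}$, which is precisely the assertion that $T(A)$ is parallel to $T(B)$. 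Thus $T$ preserves parallel pairs with respect to $\|\cdot\|_{(k)}$.

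With that established, I would invoke Theorem~\ref{main}: for $1<k\le n$, a bijective linear parallelism preserver on $\bM_n(\IR)$ relative to the Ky-Fan $k$-norm is a scalar multiple of a linear isometry, and therefore takes one of the forms listed there, including the exceptional forms built from $\IL$ when $(k,n)=(2,4)$. For the remaining value $k=1$ the Ky-Fan norm is the operator norm, and the classification of its parallelism preservers is the known result quoted in the introduction; the same reduction shows a triangle-equality preserver is a parallelism preserver, so the conclusion holds there as well.

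There is no genuine obstacle in this corollary: all the substantive work resides in Theorem~\ref{main}. The entire proof consists of the elementary implication that a triangle-equality preserver preserves parallelism, and the single point worth a sentence is that over $\IR$ the available units are just $\pm 1$, which is exactly why linearity of $T$ reproduces, via $T(\mu B)=\mu T(B)$, the unit scaling demanded by the definition of parallelism.
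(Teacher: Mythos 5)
Your argument is correct and is precisely the paper's own route: the reduction ``triangle-equality preserver $\Rightarrow$ parallelism preserver'' via $\|A+\mu B\|_{(k)}=\|A\|_{(k)}+\|\mu B\|_{(k)}$ and $T(\mu B)=\mu T(B)$ is exactly what the paper records before Corollary~\ref{maincor}, and the real corollary is then an immediate application of Theorem~\ref{main}. One caveat: your closing aside about $k=1$ overreaches, since the paper notes that for the operator norm on $\bM_2(\IR)$ there are additional bijective preservers of triangle-equality pairs that are \emph{not} scalar multiples of isometries; this does not affect the statement at hand, which concerns $1<k\le n$.
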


Our presentation for the proof of Theorem \ref{main} is similar to that of Theorem \ref{main2} in Section \ref{section2}. Instead of $\bH_k\subseteq\bM_k(\IC)$, we will be dealing
with $\bS_k\subseteq\bM_k(\IR)$,  the set of all symmetric real $k$-by-$k$ matrices. And instead of unitary matrices, we will be dealing with orthogonal matrices.
Also, the complex unit  in this case is real and therefore equal to $1$ or $-1$.
The results with proofs follow along the same lines as in Section \ref{section2}, except when $(k,n)=(2,4)$, where we have additional isometries.

We note that the proof for the trace-norm case follows exactly along the same lines as the complex case in Section \ref{section2}.
However, for Ky-Fan $k$-norms with $2\le k\le n-1$,
there are peculiarities in the real case.  One obvious difference from the complex case is the presence of special isometries when $(n,k) = (2,4)$.  Another more subtle difference is that the real span of psd matrices is $\bS_k$ in the real case, instead of $\bH_k$ in the complex case, with different (real) dimensions ($k(k+1)/2$ versus $k^2$); another contrast is that the complex span of psd matrices is $\bM_k$.


With this in mind, the analogous statement of Lemma \ref{Lem:dimP(A)} is the following (its proof proceeds along  the same lines as for Lemma~\ref{Lem:dimP(A)} and will not be repeated here). Note that $X^* = X^t$
if $X$ is a real matrix.
\begin{lemma}\label{Lem1:dimP(A)} Let $A \in \mathbb \bM_n(\IR)$ and let
$U, V \in \bM_n(\IR)$ be orthogonal matrices such that $U^*AV = \diag(s_1(A), \dots, s_n(A))$. Then, the following holds.
\begin{itemize}
\item[{\rm (a)}] Suppose $s_k(A)> s_{k+1}(A)$. Then
$B \in P(A)$ if and only if $U^*BV = B_1 \oplus B_2$ such that $\mu B_1 \in \bM_k(\IR)$ is  psd for some $\mu\in \{-1,1\}$ and $|\tr B_1| = \|B\|_{(k)}$. Moreover,
$$ \Span P(A) =  U(\bS_k \oplus \bM_{n-k}(\IR))V^*.$$
Consequently, $\Span P(A)$ has dimension $k(k+1)/2 + (n-k)^2$.
\item[{\rm (b)}]  Suppose $s_k(A) = s_{k+1}(A)$. Let $p$ and $q$ be the smallest integer
and the largest integer such that $s_{p+1}(A) = s_k(A) = s_q(A)$.
\begin{itemize}
\item If $q \ne n$ or $A$ is invertible, then $B \in P(A)$ if and only if there is an orthogonal matrix $W$ of the form $W = I_p \oplus W_1 \oplus I_{n-q} \in \bM_p(\IR) \oplus \bM_{q-p}(\IR) \oplus \bM_{n-q}(\IR)$ such that $B = U W (B_1 \oplus B_2) W^* V^*$, $\mu B_1\in \bM_k(\IR)$ is  psd for some
$\mu \in \{-1,1\}$,
and $\|B_1\|_{(k)} = \|B\|_{(k)}$.
	
\item If $q=n$ and $A$ is singular, then $B \in P(A)$ if and only if there exist orthogonal matrices $W_2, W_3 \in \bM_{n-p}(\IR)$ such that $B = U (I_p \oplus W_2) (B_1 \oplus B_2) (I_p \oplus W_3) V^*$, $\mu B_1\in \bM_k(\IR)$ is  psd for some $\mu \in \{-1,1\}$,
and $\|B_1\|_{(k)} = \|B\|_{(k)}$.
\end{itemize}
 Consequently, $\Span P(A)$ has dimension at least $k(k+1)/2+k(q-k)+(n-k)^2$.
\end{itemize}
\end{lemma}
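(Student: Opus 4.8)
The plan is to mirror the proof of Lemma~\ref{Lem:dimP(A)} almost verbatim, making only the substitutions dictated by the real setting, namely replacing complex units by $\mu\in\{-1,1\}$, Hermitian matrices by symmetric matrices, and unitary matrices by orthogonal ones. As in the complex case, since $P(XAY)=XP(A)Y$ for orthogonal $X,Y$, I would first reduce to the case $A=\diag(s_1(A),\dots,s_n(A))$ so that $Ae_j=s_j(A)e_j$ with respect to the standard basis of $\IR^n$.

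For part (a), I would start from Proposition~\ref{lem:P(A)}(b), which holds over $\IR$ with $\mu\in\{-1,1\}$: for $B\in P(A)$ there are orthogonal $U,V$ and $\mu$ with $U^*AV=A_1\oplus A_2$, $A_1=\diag(s_1(A),\dots,s_k(A))$, and $U^*BV=B_1\oplus B_2$ with $\mu B_1\in\bM_k(\IR)$ psd. The strict gap $s_k(A)>s_{k+1}(A)$ forces the first $k$ columns of $V$ (and of $U$) to lie in $\Span\{e_1,\dots,e_k\}$, so $U=U_1\oplus U_2$ and $V=V_1\oplus V_2$ are block-orthogonal; the identity $U_1^*A_1V_1=A_1$ with $A_1$ invertible gives $U_1=V_1$ exactly as before. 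Hence every $B\in P(A)$ splits as $\hat B_1\oplus\hat B_2$ with $\hat B_1$ a real scalar multiple of a real psd (hence symmetric) matrix. The one genuine change from the complex argument appears here: the real span of the symmetric psd matrices in $\bM_k(\IR)$ is all of $\bS_k$ (dimension $k(k+1)/2$), not all of $\bM_k(\IR)$, which is why $\Span P(A)=U(\bS_k\oplus\bM_{n-k}(\IR))V^*$ and $\dim\Span P(A)=k(k+1)/2+(n-k)^2$. For the converse inclusion I would note that every real psd $B=B_1\oplus B_2$ with $s_k(B_1)\ge s_1(B_2)$ lies in $P(A)$, and the span of such matrices is exactly $\bS_k\oplus\bM_{n-k}(\IR)$.

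For part (b), with $s_k(A)=s_{k+1}(A)$, I would again invoke Proposition~\ref{lem:P(A)}(b) and group equal singular values of $A$; considering $Y^*A^*AY=A^*A$ shows $X=X_1\oplus\cdots\oplus X_t$ and $Y=Y_1\oplus\cdots\oplus Y_t$ inherit the block structure of the singular-value multiplicities, with $X_j=Y_j$ whenever the corresponding singular value is nonzero. Collapsing the blocks outside the $k$-th multiplicity block into identities produces the claimed form $B=UW(B_1\oplus B_2)W^*V^*$ with $W=I_p\oplus W_1\oplus I_{n-q}$ in the case $q\ne n$ or $A$ invertible, and the two-sided orthogonal form $B=U(I_p\oplus W_2)(B_1\oplus B_2)(I_p\oplus W_3)V^*$ when $q=n$ and $A$ is singular (here the left and right orthogonal factors need not coincide since the trailing singular value vanishes). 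The lower bound on $\dim\Span P(A)$ follows as in the complex case: using rank-one $B=xx^*\oplus0_{n-q}$ with $x\in\IR^q$ rotated by $W=I_p\oplus W_1\oplus I_{n-q}$ into $\IR^{p+1}\oplus0\subseteq\IR^k\oplus0$ shows $\Span P(A)\supseteq\bS_q\oplus0_{n-q}$, and taking $tI_k\oplus B_2$ for large $t$ shows it contains $0_k\oplus\bM_{n-k}(\IR)$, giving $\dim\Span P(A)\ge k(k+1)/2+k(q-k)+(n-k)^2$.

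I do not expect a serious obstacle, since the structure of the complex proof transfers directly; the only points requiring real care are (i) tracking the correct real dimension $k(k+1)/2$ of $\bS_k$ in place of $k^2$, which propagates through both the equality in (a) and the lower bound in (b), and (ii) confirming that the reduction arguments (block-diagonalization from the singular-value decomposition, the $U_1=V_1$ deduction, and the rank-one spanning computation) are all insensitive to the field, using only that real psd matrices are symmetric and that $\Span_\IR(\mathrm{psd}_k\cap\bS_k)=\bS_k$. Because all these ingredients are routine real-linear-algebra analogues of steps already carried out over $\IC$, the authors are justified in omitting the repetition.
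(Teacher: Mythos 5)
Your proposal is correct and matches the paper exactly: the authors explicitly omit the proof of Lemma~\ref{Lem1:dimP(A)}, stating only that it proceeds along the same lines as Lemma~\ref{Lem:dimP(A)}, and your adaptation (replacing complex units by $\mu\in\{-1,1\}$, unitaries by orthogonal matrices, and $\bM_k$ by $\bS_k$ as the real span of the psd cone) is precisely the intended argument. Your dimension counts, including the lower bound $k(k+1)/2+k(q-k)+(n-k)^2$ obtained from $\bS_q\oplus 0_{n-q}$ and $0_k\oplus\bM_{n-k}(\IR)$, check out.
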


Corollary \ref{cor:SASB} and Corollary \ref{cor:minimal} also hold for $\bM_n(\mathbb R)$. Note that Item (ii) of Corollary \ref{cor:SASB} is not possible for matrices over real fields because
for Item (ii) we would have
$\Span P(B)=\bM_k(\IR)\oplus (X_2\bS_{k}Y_2^\ast) \neq \Span P(A) =\bS_k \oplus \bM_k$ (here, $X_2,Y_2$ are the second diagonal blocks of orthogonal matrices $X,Y$). We define $\cC$ in exactly the same way as in Definition \ref{defn1}. We just note that its affine hull is $\Span_{\IR}\cC=\bS_k\oplus\bM_{n-k}(\IR)$ and it satisfies all other properties mentioned in Proposition~\ref{prop1}.

The proof of Proposition \ref{prop:phi-preservec-cone-CC} requires a separate proof for real fields.  We restate the proposition for the real case here for convenience. 

\begin{proposition}\label{prop1:phi-preservec-cone-CC}
 Let  $T\colon\bM_n(\IR)\to\bM_n(\IR)$ be  a bijective linear map which preserves parallel pairs relative to a Ky-Fan $k$-norm for $1< k< n$. Assume there exist $A,B\in{\cC}^\circ$ such that $T(A)=B$. Then $$T({\cC}^\circ) = {\cC}^\circ\quad\hbox{  and }\quad T(\partial \cC) = \partial \cC.$$ Moreover, $T$ maps  $\Span P(B)=\Span P(A)$ bijectively onto itself.
\end{proposition}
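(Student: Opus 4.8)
The plan is to adapt the proof of Proposition~\ref{prop:phi-preservec-cone-CC} to the real setting, keeping its two-step structure but replacing $\bH_k$ by $\bS_k$, complex units by $\pm1$, and unitary matrices by orthogonal ones throughout. As in the complex case, since $A,B\in\cC^\circ$ satisfy $s_k>s_{k+1}$, Lemma~\ref{Lem1:dimP(A)}(a) gives $\bV:=\Span P(B)=\Span P(A)=\bM_k(\IR)\oplus\bM_{n-k}(\IR)$, and the real analogue of Corollary~\ref{cor:minimal} yields $T(\bV)=\bV$, so it remains to prove $T(\cC^\circ)=\cC^\circ$.

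For \textbf{Step 1} (that $T$ maps $\cC^\circ$ into $\cC$), I would run exactly the argument from Proposition~\ref{prop:phi-preservec-cone-CC}: for $X=X_1\oplus X_2\in\cC^\circ$, the matrix $A+tX$ is parallel to $A$ for all $t>0$, so $B+tT(X)$ is parallel to $B$, forcing $B_1+tY_1$ to be a $\pm1$-multiple of a psd matrix (here the complex unit $\mu$ is real) and $s_k(B_1+tY_1)\ge s_1(B_2+tY_2)$. Letting $t\to\infty$ gives $s_k(Y_1)\ge s_1(Y_2)$ and $Y_1=\pm P$ for a psd $P$. The eigenvalue-argument analysis that ruled out non-real units in the complex case now simplifies: the nonzero eigenvalues of $I_k+t(\pm1)\hat P$ must keep a constant sign for all $t\ge0$, which (when $\hat P$ is not scalar) forces the sign to be $+$, so $Y_1$ is psd, and otherwise $Y_1=\alpha B_1$ with $\alpha\in\IR\setminus[0,\infty)$, i.e.\ $\alpha<0$. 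The convexity argument on the segment $[X,X']$ then excludes the negative-multiple possibility exactly as before, and the dimension comparison $\Span\cC^\circ=\bM_k(\IR)\oplus\bM_{n-k}(\IR)$ versus $\IR B_1\oplus\bM_{n-k}(\IR)$ rules out $T(\cC^\circ)\subseteq\IR B_1\oplus\bM_{n-k}(\IR)$.

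For \textbf{Step 2} (that $T(\cC^\circ)=\cC^\circ$), I again take $Y\in\cC^\circ$, set $X=T^{-1}(Y)=X_1\oplus X_2\in\bV$, and must show the $k$ largest singular values of $X$ sit in the first block. \emph{The main obstacle, and the only genuine point of divergence from the complex proof, is the $n=2k$ case.} Here I would stress that Item~(ii) of Corollary~\ref{cor:SASB} is \emph{vacuous} over $\IR$, as already noted in the text right before Definition~\ref{defn1}: for a block-swapped $B$ one would get $\Span P(B)=\bM_k(\IR)\oplus(X_2\bS_kY_2^*)\neq\bS_k\oplus\bM_k(\IR)=\Span P(A)$, since $\bS_k\neq\bM_k(\IR)$. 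This means that in the real setting the connectedness-plus-continuity argument tracking $L(t)=tA+(1-t)X$ already pins the top $k$ singular values to the first block without the auxiliary $W_1=\pm I_k$ analysis the complex proof needed; equivalently, the real span of $\cC$ being $\bS_k\oplus\bM_{n-k}(\IR)$ (of strictly smaller dimension than $\bH_k\oplus\bM_{n-k}$) makes the block-swap impossible directly. Thus I obtain $s_k(X_1)>s_1(X_2)$, so $X\in\cC^\circ\cup(-\cC^\circ)$; as in the complex case, $T^{-1}(\cC)\subseteq\cC\cup(-\cC)$, $T$ is a homeomorphism permuting the two components, and $T(A)=B\in\cC^\circ$ forces $T(\cC^\circ)=\cC^\circ$, whence also $T(\partial\cC)=\partial\cC$ by continuity. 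I expect the $n=2k$ subcase to be the only place requiring real-specific care, precisely because over $\IR$ the second alternative of Corollary~\ref{cor:SASB} never arises.
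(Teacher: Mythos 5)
There is a genuine gap, and it sits exactly where you declare the real case to be easy. In Step 2 you pass from ``the $k$ largest singular values of $X=X_1\oplus X_2$ sit in the first block'' (i.e.\ $s_k(X_1)>s_1(X_2)$) directly to ``$X\in\cC^\circ\cup(-\cC^\circ)$.'' Over $\IR$ the block $X_1$ is a symmetric matrix, and a symmetric matrix with large singular values need not be definite: $X_1=\diag(1,-1)$, $X_2=(1/2)$ satisfies $s_k(X_1)>s_1(X_2)$ but $X_1\oplus X_2\notin\cC\cup(-\cC)$. You dismiss the ``$W_1=\pm I_k$ analysis'' of the complex proof as an auxiliary device for the $n=2k$ block swap, but that is a misreading: in the complex argument that computation (showing $U_1V_1=\pm I_k$ in the singular value decomposition $X_1=U_1D_1V_1$) is precisely what proves $X_1$ is positive or negative definite, and it is needed for \emph{every} $n,k$, not just $n=2k$. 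The paper's real proof supplies the analogous missing step: diagonalize $X_1=O_1^tD_1O_1$, set $D_1^{(s)}=|D_1|^{-1}D_1$, and compare $\Span P(OXO^t)=(D_1^{(s)}\bS_k)\oplus\bM_{n-k}(\IR)$ with $O\,\Span P(X)\,O^t=\bS_k\oplus\bM_{n-k}(\IR)$; equality forces $D_1^{(s)}=\pm I_k$, i.e.\ $X_1$ is positive or negative definite. Your proposal has no substitute for this, so the conclusion $T^{-1}(\cC^\circ)\subseteq\cC^\circ\cup(-\cC^\circ)$ is unproved. (Also, invoking ``$T^{-1}(\cC)\subseteq\cC\cup(-\cC)$ as in the complex case'' is circular, since in the complex case that inclusion is the \emph{output} of the very analysis you discarded.)

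A secondary but real error: you assert that Lemma~\ref{Lem1:dimP(A)}(a) gives $\Span P(A)=\bM_k(\IR)\oplus\bM_{n-k}(\IR)$. It does not; it gives $\Span P(A)=\bS_k\oplus\bM_{n-k}(\IR)$, of dimension $k(k+1)/2+(n-k)^2$, because the real span of real psd matrices is $\bS_k$, not $\bM_k(\IR)$ --- this is exactly the ``subtle difference'' the paper flags before Lemma~\ref{Lem1:dimP(A)}, and it is what makes the $\Span P(OXO^t)$ comparison above work. You are right that Item~(ii) of Corollary~\ref{cor:SASB} is vacuous over $\IR$ and that the $n=2k$ swap causes no trouble, and your Step~1 (though more laborious than the paper's, which gets $T(\cC)\subseteq\cC\cup(-\cC)$ immediately from $\mu\in\{-1,1\}$ and then settles the sign by connectedness of $\cC\setminus\{0\}$ and the anchor $T(A)=B\in\cC^\circ$) could be made to work; but the definiteness of $X_1$ in Step~2 is the crux of the real case and must be argued.
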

The reason for a separate proof, roughly speaking, is because in the complex case, $P(A)\setminus\{0\}=\IT(\cC\setminus\{0\})$ (with $\IT:=\{\mu\in\IC:|\mu|=1\}$) is a connected set whenever $A\in{\cC}^\circ$  while in the real case the blunted cones $\cC\setminus\{0\}$ and $-\cC\setminus\{0\}$ are the two connected components of $P(A)\setminus\{0\}$. In the subsequent proofs, we will typically write $\bM_n$ instead of $\bM_n(\IR)$ for notational simplicity.

\begin{proof}
The last claim follows the proof of Proposition \ref{prop:phi-preservec-cone-CC}
except that now, Lemma~\ref{Lem1:dimP(A)} gives $\Span P(A)=\Span P(B)=\bS_k\oplus\bM_{n-k}$ is invariant for~$T$.
We now show $T(\cC^{\circ}) \subseteq \cC^{\circ}$.
Every matrix $X \in \cC$ is parallel to $A$. So, $T(X) = Y \in \bS_k \oplus \bM_{n-k}$ is parallel to  $B$.  So, $Y = Y_1 \oplus Y_2$ with $\mu Y_1$ positive semi-definite for some $\mu\in\{-1,1\}$ and $s_k(Y_1) \ge s_1(Y_2)$. So, $T(\cC) \subseteq \cC \cup -\cC$. Now, $\cC$ and $-\cC$ are connected cones with intersection equal to $\{0\}$. We may replace $T$ by $-T$ if needed, and assume that $T(\cC) \subseteq \cC$. Since $T\colon \bS_k\oplus \bM_{n-k} \rightarrow \bS_k \oplus \bM_{n-k}$ is bijective linear it will send interior points to interior points.

We now claim that $T^{-1}(\cC^{\circ}) \subseteq \cC^{\circ}$.
To see this,
let $Y \in \cC^{\circ}$, so
$Y = Y_1 \oplus Y_2\in \bS_k \oplus \bM_{n-k}$ with 
$Y_1$ 
positive definite and $s_k(Y_1) > s_1(Y_2).$  If
$T(X) = Y$, then, by Corollary~\ref{cor:minimal}, $T^{-1}(\Span P(Y))=\Span P(X)$, $s_k(X) > s_{k+1}(X),$
and $X = X_1 \oplus X_2\in\bS_k\oplus\bM_{n-k}$.
Moreover, since the relative interior of $\cC$ is again a convex cone, for all $t > 0,$ \begin{equation}\label{Yt}
\tilde{Y}_t = (B_1\oplus B_2) + t(Y_1 \oplus Y_2) \hbox{ satisfies } s_k(\tilde{Y}_t) > s_{k+1}(\tilde{Y}_t).
\end{equation}
By Corollary~\ref{cor:minimal}, $\tilde{X}_t = (A_1 + tX_1) \oplus (A_2 + t X_2)$ satisfies $s_k(\tilde{X}_t) > s_{k+1}(\tilde{X}_t)$. Suppose, by way of contradiction, that $s_1(X_2)$ is one of the $k$ largest singular values of $X$. Then $s_{1}(X_2) > s_k(X_1)$. For large $t> 0$, $s_1(A_2 \oplus t X_2)$ is one of the $k$ largest singular values of $A + tX$, but for very small $t>0$, $s_k(A_1 + tX_1) > s_1(A_2+tX_2)$. So, there will be a $t_0 > 0$ such that $s_1(A_1+t_0 X_1) \ge \dots \ge s_k(A_1+t_0 X_1) = s_1(A_2 + t_0 X_2)$,  contradicting $s_k(\tilde{X}_t) > s_{k+1}(\tilde{X}_t)$.  Thus $s_k(X_1) > s_1(X_2)$.

Observe that $\Span P(Y)=\Span P(B)=\bS_k\oplus\bM_{n-k}$ and this set is invariant for $T^{-1}$. Then also $ \Span P(X)=\bS_k\oplus\bM_{n-k}$. Now choose an orthogonal matrix $O_1\in\bM_k$ such that $D_1=O_1X_1O_1^t$ is diagonal. Let $O=O_1\oplus I_{n-k}$ and let $D_1^{(s)}=|D_1|^{-1}D_1$ {(so $D_1^{(s)}$ is an orthogonal diagonal matrix)}. Then, $\Span(P(OXO^t))=(D_1^{(s)}\oplus I_{n-k})\Span(P(|D_1|\oplus X_2))=(D_1^{(s)} \bS_k)\oplus \bM_{n-k}$, which equals $O\Span P(X)O^t = \bS_k\oplus \bM_{n-k}$ if and only if $D_1^{(s)}=\pm I_k$. Hence $X_1$ is  either positive or negative definite.

Thus $T^{-1}$ maps the connected blunted cone $\cC\setminus\{0\}$ into the disconnected set $(\cC\setminus\{0\})\cup ((-\cC)\setminus\{0\})$,
whence $T^{-1}(\cC^{\circ}) \subseteq \cC^{\circ}$.
This shows that $T(\cC^\circ) = \cC^\circ$. Being a homeomorphism, $T$ will then map $\partial \cC$, which coincides with  the boundary of $\cC^\circ$, onto the boundary of $T(\cC^\circ)=\cC^\circ$, as claimed.
\end{proof}

As before, with  Lemma~\ref{prop:Pert} in mind, we define three sets:
\begin{align*}
{\mathcal S}_1 &= \{X \in \partial \cC : \text{rank} \, X = 1\} = \{ aR \oplus 0_{n-k} : a > 0, R \in \bS_k \text{ is a rank-one projection}\}, \\
{\mathcal S}_+ &= \{a (I_k \oplus P) : a > 0, P \text{ is orthogonal with } \det P = 1\},\\
{\mathcal S}_- &= \{a (I_k \oplus P) : a > 0, P \text{ is orthogonal with } \det P = -1\}.
\end{align*}
Observe that ${\mathcal S}_1, {\mathcal S}_+, {\mathcal S}_-$ are mutually disjoint. Moreover, they are path-connected. For ${\mathcal S}_1$ this holds because if $X \in  {\mathcal S}_1 \setminus \IR E_{11}$, then $X = xx^\ast$  for some vector $x \in \IR^k \oplus 0_{n-k}$, linearly independent of $e_1$, and hence $\lambda\mapsto (\lambda e_1+(1-\lambda) x)(\lambda e_1+(1-\lambda) x)^\ast$ is a path in ${\mathcal S}_1$ connecting $X$ to $E_{11}$. For ${\mathcal S}_+$ its path-connectedness follows by using a parametrization of the special orthogonal group via $e^X$  where the parameter $X=-X^t$  runs over (the path connected set of) all skew-symmetric matrices, while
\begin{equation}\label{eq:S-}
  {\mathcal S}_-=(I_k\oplus\diag(1,\dots1,-1)){\mathcal S}_+
\end{equation}
is clearly also path-connected. Moreover, by using the determinant we see that there can be no path between $\cS_-$ and $\cS_+$ and by using the continuity of singular values we see that neither $\cS_1\cup \cS_-$ nor  $\cS_1\cup\cS_+$ are  path connected.

We will further need that \begin{equation}\label{eq:(n-k)>2}
 \Span {\mathcal S}_1 = \bS_k \oplus 0_{n-k}\quad \hbox{and, for } n - k \geq 3,  \quad \Span {\mathcal S}_+ = \Span {\mathcal S}_- = \IR I_k\oplus\bM_{n-k}(\IR).
\end{equation}
(To see the last two equalities, note that, for any orthonormal set  $y_1,\dots,y_{n-k}\in\IR^{n-k}$, one has $I_k\oplus Y_{\pm}\in {\mathcal S}_+$  where $Y_{\pm} = \pm(y_1 y_1^t + y_2 y_2^t ) +\sum_{j=3}^{n-k} y_j y_j^t$; thus $0_k \oplus R \in \Span {\mathcal S}_+$ for every rank-two projection $R$. One then deduces that $0_k\oplus R \in  \Span {\mathcal S}_+$ for every rank-one projection $R$, giving $\Span {\mathcal S}_+ = \IR I_k \oplus \bM_{n-k}(\IR)$. One applies \eqref{eq:S-} to get the first equality.) In contrast, if $n -k = 2$, then
 \begin{equation}\label{eq:n-k=2}
  \Span {\mathcal S}_+=\IR I_k\oplus \Bigl(\IR\left(\begin{smallmatrix}
             1 & 0 \\
             0 & 1
              \end{smallmatrix}\right)+\IR  \left(\begin{smallmatrix}
             0 & 1 \\
             -1 & 0
              \end{smallmatrix}\right)\Bigr)\quad\hbox{and}\quad \Span {\mathcal S}_{-}=\IR I_k\oplus \Bigl(\IR\left(\begin{smallmatrix}
             1 & 0 \\
             0 & -1
              \end{smallmatrix}\right)+\IR\left(\begin{smallmatrix}
             0 & 1 \\
             1 & 0
              \end{smallmatrix}\right)\Bigr)
 \end{equation}
while  if $n-k = 1$, then
\begin{equation}\label{eq:n-k=1}
\Span {\mathcal S}_+ = \IR I_n \quad\hbox{and }\quad \Span {\mathcal S}_- = \IR\,\diag(1,\dots,1,-1).
\end{equation}
\medskip

The next result is the  counterpart to Proposition~\ref{prop:permutes-CC} over the  field of real numbers.
\begin{proposition}\label{prop:permutes}
 Suppose $1<k<n$ and $T$ is a bijective linear map on $\bS_k \oplus \bM_{n-k}(\IR)$ satisfying $T(\cC) = \cC$ and $T(\partial \cC) = \partial \cC$.   Then $T$ permutes the sets ${\mathcal S}_1$, ${\mathcal S}_+$, ${\mathcal S}_-$ among themselves; actually, $T$
fixes ${\mathcal S}_1$ except possibly when $(k, n) = (2, 4)$.
\end{proposition}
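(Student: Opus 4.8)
The plan is to mirror the complex argument of Proposition~\ref{prop:permutes-CC}: first isolate $\mathcal{S}_1\cup\mathcal{S}_+\cup\mathcal{S}_-$ as a $T$-invariant set via the $\Pert$ construction, then use the span data in \eqref{eq:(n-k)>2}--\eqref{eq:n-k=1} to decide which component $\mathcal{S}_1$ can be sent to. First I would record the real analogue of Lemma~\ref{prop:Pert}: working inside the affine hull $\bS_k\oplus\bM_{n-k}(\IR)$ of $\cC$, the same proof (diagonalize $X$ by orthogonal congruence and run the numerical-range estimate, reading ``orthogonal'' for ``unitary'') shows that for $X\in\partial\cC$ one has $\dim_\IR\Span_\IR(\Pert(X))=1$ exactly when $X$ is a rank-one psd matrix or $X=a(I_k\oplus P)$ with $a>0$ and $P$ orthogonal; that is, exactly when $X\in\mathcal{S}_1\cup\mathcal{S}_+\cup\mathcal{S}_-$. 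Since $T$ is a linear bijection with $T(\cC)=\cC$ and $T(\partial\cC)=\partial\cC$, we have $Z\in\Pert(X)\iff T(Z)\in\Pert(T(X))$, hence $T\bigl(\Span_\IR\Pert(X)\bigr)=\Span_\IR\Pert(T(X))$ and the dimension-one property is preserved; so $T$ maps $\mathcal{S}_1\cup\mathcal{S}_+\cup\mathcal{S}_-$ bijectively onto itself. As $T$ is a homeomorphism of the ambient real space and these three sets are mutually disjoint, each path-connected, and pairwise joined by no path, $T$ carries path components to path components and therefore permutes $\{\mathcal{S}_1,\mathcal{S}_+,\mathcal{S}_-\}$; call the induced permutation $\sigma$.

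It then remains to show $\sigma(1)=1$ unless $(k,n)=(2,4)$. Linearity gives $T\bigl(\Span_\IR\mathcal{S}_i\bigr)=\Span_\IR\mathcal{S}_{\sigma(i)}$, so $\sigma(1)\ne1$ would force $\dim_\IR\Span_\IR\mathcal{S}_1=\dim_\IR\Span_\IR\mathcal{S}_\pm$. When $n-k=1$, or $n-k=2$ with $k\ge3$, this is impossible: by \eqref{eq:n-k=1} and \eqref{eq:n-k=2} the right-hand dimension is $1$ respectively $3$, whereas $\dim_\IR\Span_\IR\mathcal{S}_1=k(k+1)/2$ is at least $3$ respectively at least $6$. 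The only leftover small case, $n-k=2$ with $k=2$, is precisely the excluded pair $(k,n)=(2,4)$, where all three spans have dimension $3$ and the counting genuinely fails (this is where the extra isometries live).

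The delicate case is $n-k\ge3$, and I expect it to be the main obstacle, because there the span dimensions can coincide (e.g.\ $k(k+1)/2=1+(n-k)^2$ at $(k,n)=(4,7)$), so dimension counting alone is inconclusive; moreover, unlike the complex case, comparing complex-span dimensions is unavailable since $T$ acts only on real matrices. The resolution I would use is the observation from \eqref{eq:(n-k)>2} that $\Span_\IR\mathcal{S}_+=\Span_\IR\mathcal{S}_-=:W=\IR I_k\oplus\bM_{n-k}(\IR)$ coincide, whereas $\Span_\IR\mathcal{S}_1=\bS_k\oplus0_{n-k}$ is a genuinely different subspace. If $\sigma(1)=+$ (the case $\sigma(1)=-$ is symmetric), then $\sigma$ sends one of $\mathcal{S}_+,\mathcal{S}_-$ to $\mathcal{S}_1$ and the other to $\mathcal{S}_-$; since both source spans equal $W$, the single subspace $T(W)$ would have to be both $\Span_\IR\mathcal{S}_1$ and $\Span_\IR\mathcal{S}_-=W$, forcing $\Span_\IR\mathcal{S}_1=W$, which is absurd. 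Hence $\sigma(1)=1$ for all $n-k\ge3$, and $T$ fixes $\mathcal{S}_1$ in every case except possibly $(k,n)=(2,4)$, as claimed.
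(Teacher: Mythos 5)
Your proof is correct. The opening step (the real analogue of Lemma~\ref{prop:Pert}, the invariance of $\mathcal{S}_1\cup\mathcal{S}_+\cup\mathcal{S}_-$ under $T$, and the identification of the three sets as the path components, hence permuted by the homeomorphism $T$) and the dimension counts disposing of $n-k=1$ and of $n-k=2$ with $k\ge 3$ coincide with the paper's argument. Where you genuinely diverge is the main case $n-k\ge 3$. The paper argues via convex geometry: if $T(\mathcal{S}_1)\ne\mathcal{S}_1$, then $T$ maps the cone $(\Span_{\IR}\mathcal{S}_1)\cap\cC=\mathrm{psd}_k\oplus 0_{n-k}$ onto $(\Span_{\IR}\mathcal{S}_\pm)\cap\cC=\{\lambda I_k\oplus X: s_1(X)\le\lambda\}$ and hence carries extreme rays to extreme rays; the extreme rays of the first cone are exactly $\mathcal{S}_1$ (citing Hill--Waters), while those of the second contain all of $\mathcal{S}_+\cup\mathcal{S}_-$ (citing Grone--Marcus on orthogonal matrices being extreme points of the unit ball), so $T(\mathcal{S}_1)$ would contain two distinct components, a contradiction. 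You instead use the purely linear observation that, for $n-k\ge 3$, $\Span_{\IR}\mathcal{S}_+=\Span_{\IR}\mathcal{S}_-$ forces $\Span_{\IR}\mathcal{S}_{\sigma(+)}=\Span_{\IR}\mathcal{S}_{\sigma(-)}$ under the linear bijection $T$, which is impossible once $\{\sigma(+),\sigma(-)\}=\{1,\mp\}$ because $\bS_k\oplus 0_{n-k}\ne\IR I_k\oplus\bM_{n-k}(\IR)$. Your route is shorter and avoids the extreme-ray machinery and the two external citations; the paper's route is less dependent on the accidental coincidence $\Span_{\IR}\mathcal{S}_+=\Span_{\IR}\mathcal{S}_-$ and so transfers more directly from the structure of the cones themselves. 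You also correctly identify why the case split is necessary: dimension counting alone can fail for $n-k\ge 3$ (e.g.\ $(k,n)=(4,7)$ gives $10=10$), and the complex-span comparison used in Proposition~\ref{prop:permutes-CC} has no real counterpart.
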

\begin{proof}

Being a linear bijection, $T$ maps $\Span(\Pert(X))$ onto $\Span(\Pert(T(X)))$ for each $X\in\partial \cC$.
Hence, by Proposition~\ref{prop:Pert}, $T$ maps the subset ${\mathcal S}_1\cup {\mathcal S}_+\cup {\mathcal S}_-$ bijectively onto itself. Since ${\mathcal S}_1$, ${\mathcal S}_+$ and ${\mathcal S}_-$
are all connected and disjoint, their images under a homeomorphic map $T$ are also connected
and disjoint. Then $T$ merely permutes the sets ${\mathcal S}_1$, ${\mathcal S}_+$, and ${\mathcal S}_-$ among themselves. Consequently,
$T$ also permutes the sets $(\Span {\mathcal S}_1)$, $(\Span {\mathcal S}_+)$, and $(\Span {\mathcal S}_-)$ among themselves with the same
permutation as for ${\mathcal S}_1$, ${\mathcal S}_+$, ${\mathcal S}_-$.



Now, if $n-k\ge 3$, then $(\Span {\mathcal S}_1)=\bS_k\oplus 0_{n-k}$, while $\Span {\mathcal S}_+ = \Span {\mathcal S}_-= \IR I_k\oplus \bM_{n-k}(\IR) $. Since $T$ permutes $S_1,S_-,S_+$ among themselves and fixes the cone $\cC$ we see that,  unless  $T$ fixes ${\mathcal S}_1$, it would map the cone  $$(\Span {\mathcal S}_1)\cap\cC=\mathrm{psd}_k\oplus 0_k$$ bijectively onto the cone
$$(\Span {\mathcal S}_+)\cap \cC=(\Span {\mathcal S}_{-})\cap \cC=\{\lambda I_k\oplus X: s_1(X)\le \lambda \}.$$
 Then also the extreme rays of the cone $(\Span {\mathcal S}_1)\cap\cC$  would be mapped onto the extreme rays of the cone $(\Span {\mathcal S}_+)\cap\cC$. 
 It is well-known~\cite[Theorem 3.8]{HW} that the extreme rays of the first cone are of the form $\IR_+ xx^*$, i.e., the collection of all extreme rays of the first cone is $\cS_1$.
  On the other hand, the collection of all extreme rays of the second cone contains ${\mathcal S}_-\cup{\mathcal S}_+$.
(To see this, let $P \in \bM_{n-k}(\IR)$ be orthogonal.  Suppose $I_k \oplus P = \frac{1}{2}(\alpha I_k \oplus X + \beta I_k \oplus Y)$ is an average of two elements from the second cone.  Then $\alpha + \beta = 2$ and $P = \frac{\alpha}{\alpha + \beta} (X/\alpha) + \frac{\beta}{\alpha + \beta}(Y/\beta)$ is a convex combination of two matrices $X/\alpha$, $Y/\beta$ in the unit ball.  Since orthogonal matrices are extreme points of the unit ball~\cite[Theorem~3]{Grone-Marcus}, $X/\alpha = Y/\beta = P$, so $\alpha I_k \oplus X = \alpha (I_k \oplus P)$ and $\beta I_k \oplus Y = \beta (I_k \oplus P)$.  Thus each element of ${\mathcal S}_-\cup{\mathcal S}_+$ lies in an extreme ray of the second cone, as claimed.)
  But then $T$ would map the collection of all extreme rays of the first cone,  ${\mathcal S}_1$, onto a set which contains ${\mathcal S}_-\cup {\mathcal S}_+$, a contradiction to the fact that $T$ permutes these three sets.
This implies that $T({\mathcal S}_1) = {\mathcal S}_1$ if $n-k\ge 3$.

If $n-k = 2$ and $k\ge3$, then $\dim(\Span {\mathcal S}_1) = \frac{k(k + 1)}{2} > 3 = \dim(\Span {\mathcal S}_+) =\dim(\Span {\mathcal S}_-)$
(see \eqref{eq:n-k=2}), so again $T({\mathcal S}_1) = {\mathcal S}_1$. If $n-k=1$ we argue in the same manner
(see \eqref{eq:n-k=1}).

The only remaining possibility is $(k, n) = (2, 4)$.
\end{proof}

\begin{proof}[Proof of Theorem \ref{main}] We only need to  prove that there exists a linear isometry $\Phi\colon (\bM_n,\|\cdot\|_{(k)}) \to (\bM_n,\|\cdot\|_{(k)})$
such that $(T\circ\Phi)^{-1}$ preserves the set of rank-one matrices. Once this is done, $(T\circ\Phi)$ also preserves rank-one matrices and   takes the standard form $X\mapsto MXN$ or $X\mapsto MX^t N$ (see~\cite{Westwick}) and the rest follows from Lemma~\ref{lem:rank-one=preservers}.

To this end, let $R_1 = xy^\ast$ and $R_2 = uv^\ast$ be two arbitrary rank-one matrices in $\bM_n$ with a unit Ky-Fan
norm. 
We can assume that
the vectors $x$, $y$, $u$, $v$ are all normalized. Decompose
$$u = cx + sx_2\quad\hbox{ and }\quad v = \hat{c}y + \hat{s}y_2; \qquad |c|^2 + |s|^2 = |\hat{c}|^2 + |\hat{s}|^2 = 1,\quad s, \hat{s}\ge0$$
where $x_2,y_2\in\IR^n$ are unit vectors orthogonal to $x$ and $y$, respectively. Choose a point
$(\alpha, \beta)$ on the unit circle of $\IR^2$ with $(c-\hat{c})\alpha + (s-\hat{s})\beta = 0$ and form a norm-one matrix
$$R = (\alpha x +\beta x_2)(\alpha y + \beta y_2)^\ast.$$
Then there exist orthogonal matrices $U$, $V$ such that $Ux = V y = e_1$ and $Ux_2 = V y_2 = e_2$, the first
two standard basis vectors of $\IR^n$, and we have
\begin{equation}\label{eq:R1-R}
  UR_1V^\ast = E_{11}\quad\hbox{  and }\quad URV^\ast = (\alpha e_1 +\beta e_2)(\alpha e_1 + \beta e_2)^\ast\in \bS_2\oplus 0_{n-2}\subseteq \bS_k \oplus  0_{n-k}.
\end{equation}
Moreover, by the choice of $(\alpha, \beta )$ we further have $u^\ast (\alpha x + \beta x_2) = \alpha c + \beta s = v^\ast (\alpha y + \beta y_2)$ so there
exist orthogonal $U_2, V_2\in\bM_n$ with
\begin{equation}\label{eq:R-R2}
U_2RV_2^\ast = E_{11}\quad\hbox{ and }\quad U_2R_2V_2^\ast \in \bS_2(\IR) \oplus 0_{n-2}.
\end{equation}
Consider first the pair $(R_1,R)$ in \eqref{eq:R1-R}. Extend $R_1$ to a matrix $Y = U^\ast (E_{11} + \dots+ E_{kk})V =
U^{\ast}(I_k \oplus  0_{n-k})V$ and let $A = T^{-1}(Y )$. Since $s_k(Y ) > s_{k+1}(Y )$, Corollary~\ref{cor:minimal} implies that $s_k(A) > s_{k+1}(A)$ so there exist orthogonal $\hat{U},\hat{V}$ such that $A =\hat{U}(A_1 \oplus  A_2)\hat{V}\in \bS_k \oplus \bM_{n-k}$ with $A_1$ positive definite and $s_k(A_1) > s_1(A_2)$. By replacing $T$ with a
map
$$\hat{T}\colon X\mapsto UT(\hat{U} X \hat{V})V^\ast $$
we achieve that $\hat{T}(A_1\oplus A_2) = I_k \oplus 0_{n-k}$. Then, by Corollary~\ref{cor:minimal},
$\hat{T}$ maps $\Span P(A_1 \oplus  A_2) = \bS_k \oplus  \bM_{n-k}$ onto $\Span P(I_k \oplus  0_{n-k}) = \bS_k \oplus  \bM_{n-k}$, so the conclusions
of Propositions~\ref{prop:phi-preservec-cone-CC} and \ref{prop:permutes} are valid. We next consider two cases.

Assume $(k, n) \neq (2, 4)$. Then, by Proposition \ref{prop:permutes}, $\hat{T}$, hence also $\hat{T}^{-1}$, fixes the set ${\mathcal S}_1$ which by
definition consists of all psd rank-one matrices in $\bS_k \oplus 0_{n-k}$. It follows that $\hat{T}^{-1}(E_{11}) = \hat{U}^\ast T^{-1}(R_1)\hat{V}^\ast$ is
of rank-one. Since $R_1$ was arbitrary, the linear bijection $T^{-1}$ preserves rank-one matrices and the
claim follows with the isometry $\Phi(X) = X$.

Assume $(k, n) = (2, 4)$. Then either $\hat{T}^{-1}$ fixes ${\mathcal S}_1$ or else maps it onto ${\mathcal S}_+$ or onto ${\mathcal S}_-$. In the last two
cases, $\hat{T}^{-1}$ maps every rank-one in $\bS_2\oplus 0_2$ into an invertible matrix of the form $\IR(I_2 \oplus \mathrm{O}(2))$,
where $\mathrm{O}(2) \subseteq \bM_2$ denotes the group of $2$-by-$2$ orthogonal matrices. By \eqref{eq:R1-R} and the definition of $\hat{T}$ it
follows that $T^{-1}$ either maps $R_1$ and $R$ simultaneously into rank-one matrices or else simultaneously
into invertible matrices. We now repeat the above procedure on the pair $(R,R_2)$ using \eqref{eq:R-R2} instead
of \eqref{eq:R1-R} to see that either $T^{-1}(R_1)$ and $T^{-1}(R_2)$ are both rank-one matrices, or else they are both
invertible ones. Then, by the arbitrariness of $R_1$ and $R_2$,  either $T^{-1}$ preserves the set of rank-one matrices
or else it maps it into the set of invertible ones and the same applies to $\hat{T}^{-1}$. In the former case
we are done by using the isometry $\Phi(X) = X$. In the latter case, $\hat{T}^{-1}$ maps the set ${\mathcal S}_1$ onto ${\mathcal S}_+$ or onto
${\mathcal S}_-$. Composing $\hat{T}$ with an involutive isometry $\IL$ or  isometry $\bL\colon X\mapsto \diag(1, 1, 1,-1)\IL(X)$,
respectively, we get that $(\hat{T}\circ\IL)^{-1}$ or $(\hat{T}\circ\bL)^{-1}$ fixes ${\mathcal S}_1$, so preserves rank-one. We finally compose $T$ with the isometry $\Phi(X)=\hat{U}\IL(X)\hat{V}$ or the isometry $\Phi(X)=\hat{U}\bL(X)\hat{V}$; its inverse, $(T\circ\Phi)^{-1}$ will map
rank-one matrices to rank-one matrices, as claimed.
\end{proof}

\medskip

\section{Final remarks and future study}

\begin{enumerate}

\item In all our results on Ky-Fan norms, if a bijective linear  $T$  preserves parallel pairs, then its inverse does so  also. However, this is not true for a general norm. For example, consider on $\IR^2$ the truncated Euclidean norm
       $$\|(a,b)\|_{\mathrm{trE}}:=\left\| \Bigl(\| (a,b)\|_2 \,,\,\bigl\|
   (\sqrt{2} a,b)\bigr\|
   _{\infty }\Bigr)\right\| _{\infty
   }=\begin{cases}
 \sqrt{a^2+b^2}; & | b| >| a|  \\
 \sqrt{2} | a|;  & \text{otherwise}
\end{cases}$$
One can show that $x\|y$ if and only if $x,y$ are linearly dependent or else they both belong to $\sigma\cup(-\sigma)$, where $\sigma$ is the pointed convex cone spanned  by a vertical line segment through $(1,1)$ and $(1,-1)$. Then any linear map $T$ which maps $\sigma$ into itself is a parallelism preserver with respect to $\|\cdot\|_{\mathrm{trE}}$. But if $T(\sigma) $ is properly contained in $\sigma$, it will not preserve parallel pairs in both directions. A concrete example is $T=\left(
\begin{smallmatrix}
 1 & 0 \\
 \frac{1}{12} & \frac{5}{12}
   \\
\end{smallmatrix}
\right)$.  In another venue, $T=\tfrac{1}{2}\left(
\begin{smallmatrix}
 3 & 1 \\
 1 & 3 \\
\end{smallmatrix}
\right)$ preserves parallel pairs of $\|\cdot\|_{\mathrm{trE}}$ in both directions, but it is not a scalar multiple of an isometry of $\|\cdot\|_{\mathrm{trE}}$.
\item  Observe that without assuming linearity or bijectivity  there are more pathological examples.  A trivial one is $T(A) =  f(A) A$
where  $f\colon\bM_n\to\IC$ can be any function (this  is  bijective when $f$ maps every line, spanned by any fixed nonzero matrix $A$, bijectively onto $\IC\setminus\{0\}$); a slightly more subtle one is $T(A) = F_1(A) \oplus 0$, where $F_1\colon \bM_n \to \mathrm{psd}_k$ is an arbitrary map. Finally,
the proofs are much easier if one assumes $T$ preserves
parallel pairs in both directions.
\item  By our results,  the  bijective linear preservers of parallel pairs and  
of matrix pairs $(A,B )$
satisfying $\|A+B\|_{(k)} = \|A\|_{(k)} + \|B\|_{(k)}$  are both positive multiples of isometries.
However, this is not always the case for other norms. For example,
in \cite{LTWW}, it was shown that the same conclusion holds
on $\bM_n$ equipped with the  spectral norm (i.e., Ky-Fan $1$-norm) if $n \ge 3$.
However, when $n = 2$, then in the complex case there are additional maps preserving parallel pairs
but no additional maps preserving matrix pairs $(A, B )$ satisfying $\|A+B\| = \|A\| + \|B\|$, while in the real case there are additional maps preserving parallel pairs that
will also preserve matrix pairs $(A, B)$ satisfying $\|A+B\| = \|A\| + \|B\|$.

\item One may consider bijective  linear maps on
complex or real rectangular matrices $\bM_{m,n}$
preserving parallel pairs with respect to the Ky-Fan $k$-norm.
They should be scalar multiples of isometries.


\item More generally, one may consider general unitarily invariant norms on $\bM_{m,n}$ that are
not strictly convex. For example, the $(c,p)$-norm
$\|A\| = (\sum_{j=1}^k c_j s_j(A)^p)^{1/p}$.

\item It would be interesting to extend our results to $B(H)$, the set of bounded linear operators
acting on a real or complex Hilbert space $H$, under the Ky-Fan $k$-norm
$$||A||_{(k)} =
\sup \{ ||X^*AY||_{(k)}:  X^*X = Y^*Y = I_k\}.$$
 \item One may also consider extending the results to infinite dimensional operators in other spaces, such as norm ideals of compact operators or  standard operator algebras.

\item Another direction is to extend the results to other matrix or operator algebras.
For instance, for finite dimensional irreducible algebras, the problem reduces
to the study of real or complex square matrices. For a real matrix algebra one
needs to consider the algebra of complex matrices or quaternion matrices over reals.
One may also consider the problem on the algebra of triangular matrices.
The infinite dimensional extension to nested algebras would be of interest too.

\end{enumerate}
\bigskip
\noindent
{\bf \Large Acknowledgment}

The research of Li was
supported by the Simons Foundation Grant 851334. This  research was supported in part by the Slovenian Research Agency (research program P1-0285, research project N1-0210, and bilateral project BI-US-22-24-129).

\bigskip\bigskip

\noindent
(Kuzma)
Department of Mathematics, University of Primorska, Slovenia; Institute of Mathematics,
Physics,
and Mechanics, Slovenia. E-mail: bojan.kuzma@famnit.upr.si

\medskip\noindent
(Li) Department of Mathematics, College of William \& Mary, Williamsburg, VA 23187, USA.
ckli@math.wm.edu

\medskip\noindent
(Poon)
Department of Mathematics, Embry-Riddle Aeronautical University, Prescott AZ 86301, USA.
E-mail: poon3de@erau.edu

\medskip\noindent
(Singla)
Department of Mathematics, University of Primorska, Slovenia. E-mail: ss774@snu.edu.in

\end{document}